\def\LaTeX{\leavevmode L\raise.42ex
\hbox{\kern-.3em\size{\sf@size}{0pt}\selectfont A}\kern-.15em\TeX}
\newcommand{\BibTeX}{{\rm B\kern-.05em{\sc i\kern-.025emb}\kern-.08em\TeX}}
\newtheorem{thm}{Theorem}[section]
\newtheorem{lem}[thm]{Lemma}
\newtheorem{prop}[thm]{Proposition}
\newtheorem{cor}[thm]{Corollary}
\theoremstyle{definition}
\newtheorem{defn}[thm]{Definition}
\newtheorem{exmp}[thm]{Example}
\def\theequation{\thesection.\@arabic\c@equation}
\newcommand{\benu}{\begin{enumerate}}
\newcommand{\enu}{\end{enumerate}}
\begin{document}
\title[Fundamental group]
{The fundamental group of an algebra with a strongly simply connected Galois covering}

\author[Castonguay]{Diane Castonguay}
\address{Instituto de Inform\'atica Bloco IMF I, Campus II, Samambaia,
Universidade Federal de Goi\'as, Goi\~ania, GO, Brazil.}
\email{diane@inf.ufg.br}
\author[Chaio]{Claudia Chaio}
\address{Centro Marplatense de Investigaciones  Matem\'aticas, Facultad de Ciencias Exactas y Naturales,
Funes 3350, Universidad Nacional de Mar del Plata, 7600 Mar del
Plata, Argentina} \email{claudia.chaio@gmail.com}
\author[Trepode]{Sonia Trepode}
\address{Centro Marplatense de Investigaciones Matem\'aticas, Facultad de Ciencias Exactas y Naturales,
Funes 3350, Universidad Nacional de Mar del Plata, 7600 Mar del
Plata, Argentina} \email{strepode@mdp.edu.ar}

\keywords{presentation, fundamental group, Galois
covering, strongly simply connected.}

\subjclass{16G70, 16G20, 16E10}
\maketitle

\begin{abstract}
In this work, we prove that if a triangular algebra $A$ admits a strongly simply
connected universal Galois covering for a given presentation then the fundamental
group associated to this presentation is free.
\end{abstract}
\vspace{.3 cm}

\section*{Introduction}

Let $A$ be a finite dimensional $k$-algebra over an algebraically closed field,  $k$. We denote by $\mbox{mod}\,A$  the module category
whose objects are the finitely generated left $A$-modules.

Covering techniques had played an important role in the representation theory of algebras.
In~\cite{BG}, K. Bongartz and P. Gabriel introduced the concept of simply connected algebras with the aim of studying representation-finite algebras
using  covering techniques. Moreover, P. Gabriel reduced the problem of studying the representation theory of representation-finite algebras to the problem of studying the representation theory of its simply connected Galois covering.

In the representation-infinite case covering techniques are more difficult to handle. The case of polynomial growth algebras  have been  considered by A. Skowro\'nski in \cite{AS:93}. On the other hand,  selfinjective algebras have also been studied having into account covering techniques.

In this paper we shall consider triangular algebras, that is, algebras $ A=kQ_A/I_A$
such that $Q_A$ has no oriented cycles. More precisely, we are concerned with triangular algebras which admit a strongly simply connected Galois covering for a given presentation $kQ_A/I_A$ of $A$. The main objective of this paper is to study their fundamental group. When the group  $G$ acting on the Galois covering $A$ is free some nice properties hold.

In  \cite{CP}, D. Castonguay and J. A. de la Pe\~na  introduced a class of algebras called algebras of the  first kind respect to
a given presentation $kQ_A/I_A$  of $A$. They proved  that for such algebras the fundamental
group $\pi_1(Q_A,I_A)$ is free.
In order to get our main result we prove that if $A$ is a triangular algebra which admits a strongly simply connected
Galois covering for a given presentation $kQ_A/I_A$ then $A$ is
of the first kind respect to
such presentation.  As a consequence we get that if  there is a presentation of a triangular algebra which admits a strongly simply connected
Galois covering then the fundamental group  is free.  More precisely,
we prove Theorem A.
\vspace{.1in}

\noindent{\bf Theorem A.} {\it Let $A$ be a triangular algebra which admits a strongly simply connected
Galois covering for a given presentation $kQ_A/I_A$ of $A$. Then,
the fundamental group $\pi_1(Q_A, I_A)$ is free.}

\vspace{.1in}

This text is organized as follows. In section 1, we recall some preliminary results
and we present two results due to
I. Assem, S. Liu and Y. Zhang, (private communication).
Section 2 is devoted to prove Theorem A.

\section{Preliminaries}

\subsection{} {\bf Locally bounded categories.}  Let $k$ be an algebraically closed field.
A \textit{$k$-category} $A$ is a category where for each pair of objects $x,y$ of $A$, the
set of morphisms $A(x,y)$ has a $k$-vector space structure and where the
composition of morphisms is $k$-bilinear.

We denote by $A_0$ the set of objects of the $k$-category $A$.\vspace{.05in}

A $k$-category $A$ is called \textit{locally bounded} if it satisfies the following conditions.
\begin{enumerate}

\item for each $x \in A_0$, the endomorphism algebra $A(x,x)$ is local;

\item distinct objects in $A_0$ are not isomorphic;

\item for each $x \in A_0$, $\sum_{y \in A_0} \mbox{dim}_k A(x,y) <\infty$ and $\sum_{y \in A_0}
\mbox{dim}_k A(y,x) <\infty.$
\end{enumerate}

\vspace{.05in}

A {\it quiver\;} $Q$ is given by two sets $Q_0$ and
$Q_1$ together with two maps $s,e \colon Q_1 \rightarrow Q_0$. The elements of $Q_0$ are
called {\it vertices} and the elements of $Q_1$ are called {\it arrows}. We assign to
each arrow $\alpha \in Q_1$ its {\it start point} $s(\alpha)$ and its {\it end point}
$e(\alpha)$.  We say that $Q$  is  a \textit{locally finite quiver} if for any vertex $a \in Q_0$
there is a finite number of neighbours of a or equivalently, if there is a finite number of arrows
ending at $a$ and a finite number of arrows starting at $a$.\vspace{.05in}

If $A$ is a locally bounded category then there exists a locally
finite quiver $Q_A$ and an admissible ideal $I$ of the path
category $kQ_A$ of $Q_A$ such that there is an isomorphism $A
\simeq kQ_A/I$, called a presentation of $A$. The pair $(Q_A,I)$
is called a {\it bound quiver}.

The category $A$ is called {\it triangular} if the corresponding quiver $Q_A$ does not have oriented cycles.

For a basic background on representation theory we refer the reader to \cite{ARS, ASS}.

\subsection{Contours and cycles.}
We recall some notions from \cite{AL}. Let $Q$ be a locally finite
quiver. For  each vertex $a$ of $Q$, we denote by $\varepsilon_a$ the
trivial path at $a$. For each arrow $\alpha$ of $Q$, we
introduce a new arrow $\alpha^{-1}$ as its formal inverse with
$s(\alpha^{-1})=e(\alpha)$ and $e(\alpha^{-1})=s(\alpha)$.

We shall write the sequences of arrows from left to right.

We recall
that a {\it walk} in $Q$ is either a trivial path or a sequence
$w=c_1\cdots c_r$ with $c_i$ an arrow or the inverse of an arrow. When $s(w)=s(c_1)$ and $e(w)=e(c_r)$ we say that $w$ is a
walk from $s(w)$ to $e(w)$.

A non-trivial walk $w=c_1\cdots c_n$ with $s(w)=e(w)=a$ is called
a {\it cycle} at a point $a$. Observe that it could be a  non oriented cycle. We say that $w$ is {\it reduced} if
$c_{i+1} \ne c_i^{-1}$ for all $1\le i<n$ and $c_1\ne c_n^{-1}$;
and that $w$ is {\it simple} if $s(c_i)$ and $s(c_j)$  are pairwise
distinct points, where $1\le i,j\le n$.
\vspace{.1in}

By a path $p$ we mean an oriented walk, that is, when for all $i=1, \dots, n$ in $p$  we have
that $p =c_1\cdots c_n$ where $c_i$ are  arrows with $e(c_i)=s(c_{i+1})$  or they are all inverse of arrows with $s(c_i)=e(c_{i+1})$ and
in this case we write $p^{-1}=c_r^{-1}\cdots
c_1^{-1}$. Moreover, in the first case, if
$s(p)=s(c_1)$ and $e(p)=e(c_r)$ we say that $p$ is a path
walk from $s(w)$ to $e(w)$.
\vspace{.1in}

We say that the length of a path $p$ is $n$ if $p$ is a sequence of $n$ arrows or $n$ inverse of arrows, that is, $p= c_1\cdots c_n$ or $p^{-1}=c_n^{-1}\cdots
c_1^{-1}$.  We denote it by $\ell(p)$.  A non-trivial path $p=c_1\cdots c_n$ with $s(p)=e(p)$ is called
an {\it oriented cycle}.
\vspace{.1in}

A {\it contour} in $Q$ from a vertex $a$ to a vertex $b$ is a pair
$(p, q)$ of two non-trivial paths from $a$ to $b$.  We say that
a contour $(p, q)$ is {\it interlaced} if $p=p_1p_2$ and $q=q_1q_2$ with
$(p_1, q_1)$ and $(p_2, q_2)$ some contours, or equivalently if the paths $p$ and
$q$ have a common point other than $a$ and $b$;
and it is called {\it reducible}  if there
exist paths $p=p_0,  p_1, \ldots,  p_n=q$ in $Q$ from $a$ to $b$  such that, for each
$0 \le i <n$, the contour $(p_{i-1}, p_{i})$ is interlaced. In this case, we say that $p$
is reducible to $q$. Otherwise, it is called irreducible.
\vspace{.1in}

Next  we show a contour $(p,q)$ which is reducible and another which is irreducible.  \vspace{.05in}

\begin{exmp} $(a)$ Let $(p,q)$ be a contour in $Q$ from $a$ to $b$ as follows

$$
\xymatrix @!0 @R=1.1cm  @C=1.7cm  {     &\ar[r]^{\alpha_1}&\ar[rd]^{\alpha_2} \\
   a\ar[ru]^{\alpha_0} \ar[r]^{\gamma_0}  \ar[rd]_{\beta_0}&\ar[ru]^{\gamma_1} \ar[r]^{\varepsilon_1}& \ar[r]^{\varepsilon_2}& b   \\
  & \ar[ru]_{\delta_1}\ar[r]_{\beta_1} &  \ar[ru]_{\beta_2}\\
     }
$$

\noindent with $p= p_0={\alpha_0}{\alpha_1}{\alpha_2}$ and $q=p_4= {\beta_0}{\beta_1}{\beta_2}$. Consider the paths in  $Q$ from $a$ to $b$, $p_1= \gamma_0\gamma_1\alpha_2$,  $p_2= \gamma_0\varepsilon_1\varepsilon_2$ and $p_3= \beta_0 \delta_1 \varepsilon_2$.  Then, $p$
is reducible to $q$, since there is a sequences of paths $p=p_0, p_1, p_2, p_3, p_4=q$ where $(p_{i-1}, p_{i})$ are interlaced for $i=0, \dots, 4$.

$(b)$ Let $(p,q)$ be a contour in $Q$ from $a$ to $b$  as follows

$$
\xymatrix @!0 @R=1.1cm  @C=1.7cm  {     &\ar[r]^{\alpha_1}&\ar[rd]^{\alpha_2} \\
   a\ar[ru]^{\alpha_0} \ar[r]^{\gamma_0}  \ar[rd]_{\beta_0}&\ar[ru]^{\gamma_1} \ar[r]^{\varepsilon_1}& \ar[r]^{\varepsilon_2}& b   \\
  & \ar[r]_{\beta_1} &  \ar[ru]_{\beta_2}\\
     }
$$

\noindent with $p= p_0={\alpha_0}{\alpha_1}{\alpha_2}$ and $q=p_4= {\beta_0}{\beta_1}{\beta_2}$. Then, $(p, q)$
is irreducible.
\end{exmp}

Let $C=c_1\cdots c_n$ be a reduced cycle. We write $c_0=c_n$ and
$c_{n+1}=c_1$. By a source in $C$ we mean a vertex $a$ having two arrows starting in it.
We define $\sigma(C)$ to be the number of sources of
$C$. Then  $C$ is an oriented cycle if and only if $\sigma(C)=0$;
and $\sigma(C)=1$ if and only if there exists a unique $1\le i\le
n$ such that $c_{i+1}\cdots c_n c_1\cdots c_i=pq^{-1}$ with $p$ and $q$
some non-trivial paths. In this case, we say that $C$ forms the
contour $(p, q)$.\vspace{.05in}

Following \cite{AL}, we have the following definition of an irreducible cycle.

\begin{defn} Let $C$ be a simple cycle of $Q$.\begin{enumerate}

\item If $\sigma(C)=1$, then we say that $C$ is {\it irreducible} if
it forms an irreducible contour.

\item If $\sigma(C)> 1$, then $C$
is {\it reducible} provided that there exists a path $p$ from $a$ to $b$ ($p: a \rightsquigarrow b$) with $a$
and $b$ vertices of $C$, and two reduced walks $w_1$ and $w_2$ from $a$ to $b$ such that $C=w_1w_2$,
$\sigma(w_1p^{-1})< n$ and $\sigma(pw_2^{-1})<n$. In any other case, it is said to be \textit{irreducible}.
\end{enumerate} \end{defn}

Next, we show an example of an reducible cycle.

\begin{exmp} Assume that $Q$ is the following quiver:

$$
\xymatrix @!0 @R=1.1cm  @C=1.7cm  {     & &y \ar[l]_{\alpha_1} \ar[rd]^{\alpha_2} \\
   a \ar[ru]^{\alpha_0}\ar[rd]_{\beta_0}\ar[r] &\dots \ar[r]^{p}& \dots \ar[r] & \ar[ld]^{\beta_2} b   \\
  && \ar[l]^{\beta_1} \\
   }
$$

\noindent where $p$ is a path in $Q$ from $a$ to $b$. Consider the simple cycle $C= w_1 w_2$ with $w_1={\alpha_0}{\alpha_1}^{-1}{\alpha_2} $ and $w_2={\beta_0}{\beta_1^ {-1}}{\beta_2}^{-1}$. Note that $w_1$ and $w_2$ are reduced walks.
Moreover, observe that $C$ is not a contour, and that $x$ and $y$ are the only sources in $C$. Then $\sigma(C)> 1$.
By Statement $(2)$ of the above definition, we know that $C$ is reducible, since $\sigma(w_1p^{-1})< 2$ and $\sigma(p w_2^{-1})<2$.
\end{exmp}

\subsection{Homotopy} Let $(Q, I)$ be a connected locally finite
bound quiver. An element $\rho=\sum_{i=1}^n\lambda_ip_i\in I(a, b),$ with
$\lambda_i\in k$ and $p_i$ paths from  $a$ to $b$ of length at
least two, is called a {\it minimal relation} of $(Q, I)$ if $n\ge
2$ and for any proper subset $J$ of $\{1, 2, \ldots, n\}$ we have that
$\sum_{j\in J} \lambda_jp_j\not\in I(a, b)$. \vspace{.05in}

The {\it homotopy relation} $\sim$ on the set of walks of $Q$
is the smallest equivalence relation satisfying the
following conditions:\begin{enumerate}

\item[(a)] If $\alpha: a\to b$ is an arrow of $Q$ then
$\alpha\alpha^{-1}\sim \varepsilon_a$ and $\alpha^{-1}\alpha\sim
\varepsilon_b$.

\item[(b)] If $\sum_{i=1}^n\lambda_ip_i$ is a minimal relation then
$p_i\sim p_j$ for all $1\le i, j\le n$.

\item[(c)] If $u\sim v$ then $w_1uw_2\sim w_1vw_2$ for all walks $w_1,
w_2$ with $e(w_1)=s(u)$ and $s(w_2)=e(v)$. \end{enumerate}

We say that a cycle is {\it contractible} if it is
homotopic to a trivial path.\vspace{.05in}

Let $x\in Q_0$ be arbitrary. The set $\pi _1(Q,I,x)$ of
equivalence classes $\overline{u}$ of closed paths $u$ starting
and ending at $x$ has a group structure defined by the operation
$\overline{u}.\overline{v}= \overline{u.v}$. Since $Q$ is
connected then this group does not depend on the choice of $x$. We
denote it by $\pi _1(Q,I)$ and we call it the \textit {fundamental
group} of $(Q,I)$.\vspace{.05in}

We write the composition from left to right.

\begin{defn}
We say that a contour $(p, q)$ is a {\it torsion contour} in $(Q, I)$
if there is a positive integer $n>1$ such that
$(p q^{-1})^{n}$ is contractible. We say that
the order of the contour $(p, q)$  is $n$ if it is the minimal positive integer $n>1$ such that
$(p q^{-1})^{n}$ is contractible.
\end{defn}

\subsection{Natural homotopy} Let $(Q, I)$ be a locally finite
bound quiver. Two paths $p, q$ of $(Q, I)$ from a point $a$ to a
point $b$ are called {\it naturally homotopic} if $p=q$ or
if there exists a sequence of paths $p=p_0, p_1, \ldots,
p_n=q$ from $a$ to $b$ such that for each $1\le i<n$, we can write
$p_i=u_iv_{i1}w_i, p_{i+1}=u_iv_{i2}w_i$, where $u_i, w_i$ are
paths and $v_{i1}, v_{i2}$ are non-trivial paths appearing in the
same minimal relation of $(Q, I)$.  It follows immediately that
the natural homotopy is an equivalence relation on the set of the
paths in $(Q, I)$ which is compatible with the compositions of paths.
Moreover, two naturally homotopic paths are homotopic.
The converse is not true,
there are  homotopic paths which are not naturally homotopic as we show in the next example.

\begin{exmp} Consider the path algebra given by the quiver $Q$

$$
\xymatrix @!0 @R=0.1cm  @C=1.7cm  {     \ar[r]^{\alpha}& &  \\
 & \ar[r]^{\gamma} & \\
     \ar[r]_{\beta} & &  \\
     }
$$

\noindent with the ideal $I=< \alpha\gamma-\beta\gamma>$. We observe that $\alpha$ and $\beta$ are homotopic in $(Q, I)$ but they are
not naturally  homotopic. In fact, $\alpha\gamma \sim \beta\gamma$ then $\alpha \sim \beta$.

On the other hand, $\alpha$ and $\beta$ are not naturally  homotopic, since they do not have subpaths that form part of a minimal relation.
\end{exmp}

\begin{defn}[Assem, Liu, Zhang] Let $C=c_1\cdots c_r$ be a reduced cycle of $Q$ with
$\sigma(C)>0$.\begin{enumerate}

\item If $\sigma(C)=1$, then $C$ is called {\it naturally
contractible} if it forms a contour $(p, q)$ with $p, q$ naturally
homotopic.

\item If $\sigma(C)>1$, then $C$ is called {\it naturally contractible} if
there exists a path (it can be trivial)
$p: a \rightsquigarrow b$ with $a$
and $b$ vertices of $C$, and two reduced walks $w_1$ and $w_2$ from $a$ to $b$ such that $C=w_1w_2^{-1}$,
$\sigma(w_1p^{-1})< n$ and $\sigma(w_2p^{-1})<n$. Moreover both cycles $w_1p^{-1}$ and $w_2p^{-1}$
are naturally contractible non-oriented cycles.
\end{enumerate}
\end{defn}

Next, we show some examples comparing the concept of irreducible  or/and contractible cycles.

\begin{exmp} $(a)$. We present  an example of a reducible contour which is not a contractible cycle. Consider the following quiver

$$\xymatrix @!0 @R=1.1cm  @C=1.7cm  {     &\ar[r]^{\alpha_2}\ar[d]_{\delta}& \ar[r]^{\alpha_3} &\ar[rd]^{\alpha_4} & \\
   a\ar[ru]^{\alpha_1} \ar[rd]_{\gamma_1} \ar[r]^{\beta_1} & \ar[r]^{\beta_2}  &\ar[rd]^{\varepsilon} \ar[r]^{\beta_3}& \ar[r]^{\beta_4}& b   \\
  & \ar[r]_{\gamma_2} & \ar[r]_{\gamma_4} & \ar[ru]_{\gamma_4}&\\
     }
$$
\noindent with the relations $\alpha_2\alpha_3 =\delta \beta_2\beta_3\beta_4$, $\beta_1\beta_2\varepsilon =\gamma_1\gamma_2\gamma_3$ and $\beta_3\beta_4= \varepsilon\gamma_4$. Let $\alpha= \alpha_1\alpha_2\alpha_3\alpha_4$ and $\gamma=\gamma_1\gamma_2\gamma_3\gamma_4$. We claim that $(\alpha, \gamma)$ is a reducible contour since it is interlaced. Observe that $(\alpha, \gamma)$ is not a contractible cycle since $\beta_1 \not \sim \alpha_1\delta$. Therefore,  $\alpha \not \sim\gamma$.

$(b)$. Next, we show an example of an irreducible contour  which is not a contractible cycle.
Consider the contour

$$\xymatrix @!0 @R=0.9cm  @C=1.7cm  {&\ar[rd]^{\alpha_2} & \\
   \ar[ru]^{\alpha_1} \ar[rd]_{\beta_1}  & &  \\
  & \ar[ru]_{\beta_2} &
    }$$

\noindent without relations. Then $(\alpha, \beta)$ is irreducible but not a contractible cycle  since $\alpha \not \sim \beta$ where $\alpha=\alpha_1\alpha_2$ and $\beta=\beta_1\beta_2$.

$(c)$. Let $Q=(\alpha, \beta)$ with $\alpha=\alpha_1\alpha_2$ and $\beta=\beta_1\beta_2$ be the contour in $(Q,I)$ as follows

$$\xymatrix @!0 @R=0.9cm  @C=1.7cm  {&\ar[rd]^{\alpha_2} & \\
  \ar@{..}[r] \ar[ru]^{\alpha_1} \ar[rd]_{\beta_1}  & \ar@{..}[r]  & \\
  & \ar[ru]_{\beta_2} &
    }$$

\noindent where $I$ is the ideal generated by the relation $\alpha_1\alpha_2 = \beta_1\beta_2$. The given contour $Q$ is  irreducible and moreover is a contractible cycle because $\alpha_1\alpha_2 \sim \beta_1\beta_2$.
\end{exmp}

The following result is a direct consequence of the definition.

\begin{lem} \label{Lemma 1.6} Let $C=c_1\cdots c_r$ be a reduced cycle of $Q$.
Then \begin{enumerate}

\item If $C$ is naturally contractible in $(Q, I)$ then $C$ is
contractible and reducible in $(Q, I)$.

\item If there exists $1\le i<j<r$ such that $c_i\cdots c_j$ and
$c_{j+1}\cdots c_sc_1\cdots c_{i-1}\,$ with $c_0=c_s$ and
$c_{r+1}=c_1$ are naturally contractible cycles then $C$ is
naturally contractible.\end{enumerate} \end{lem}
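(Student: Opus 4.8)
The plan is to unwind the relevant definitions, separating the cases $\sigma(C)=1$ and $\sigma(C)>1$, and — where needed — arguing by induction on $\sigma(C)$ (equivalently, on the number of arrows of $C$).

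\emph{Statement (1).} Every naturally contractible cycle has $\sigma(C)>0$ by definition. If $\sigma(C)=1$, then $C$ forms a contour $(p,q)$ with $p$ and $q$ naturally homotopic; since naturally homotopic paths are homotopic, $p\sim q$, hence $C=pq^{-1}\sim\varepsilon_a$ and $C$ is contractible. For reducibility I would take the sequence $p=p_0,p_1,\dots,p_n=q$ witnessing the natural homotopy, write $p_t=u_tv_{t1}w_t$ and $p_{t+1}=u_tv_{t2}w_t$ with $v_{t1},v_{t2}$ appearing in the same minimal relation, and check that the contour $(p_t,p_{t+1})$ is interlaced, because $p_t$ and $p_{t+1}$ share the end vertices of $u_t$ and of $w_t$; this very sequence then exhibits a reduction of $p$ to $q$, so $C$ is reducible. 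If $\sigma(C)>1$, the definition of natural contractibility supplies a path $p$ and reduced walks $w_1,w_2$ with $C=w_1w_2^{-1}$, $\sigma(w_1p^{-1})<\sigma(C)$, $\sigma(w_2p^{-1})<\sigma(C)$, and with $w_1p^{-1},w_2p^{-1}$ naturally contractible; these data, with the word ``naturally'' dropped, are exactly what makes $C$ reducible, and by the induction hypothesis $w_1p^{-1}$ and $w_2p^{-1}$ are contractible, whence $w_1\sim p\sim w_2$ and $C=w_1w_2^{-1}\sim\varepsilon_a$.

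\emph{Statement (2).} Set $v=s(c_i)=e(c_j)$; then, up to a cyclic rotation, $C=C_1C_2$ where $C_1=c_i\cdots c_j$ and $C_2=c_{j+1}\cdots c_rc_1\cdots c_{i-1}$ are non-trivial reduced cycles at $v$, both naturally contractible by hypothesis and hence each with $\sigma\ge 1$; in particular $\sigma(C)\ge1$, since an oriented $C$ would force $C_1$ to be oriented. If $\sigma(C)>1$, I would apply part (2) of the definition of natural contractibility with the trivial path $p=\varepsilon_v$, $w_1=C_1$, $w_2=C_2^{-1}$: then $C=w_1w_2^{-1}=C_1C_2$, the cycles $w_1p^{-1}=C_1$ and $w_2p^{-1}=C_2^{-1}$ have fewer than $r$ sources (their number of arrows being $<r$) and are naturally contractible non-oriented cycles (natural contractibility being stable under inverting a cycle), so $C$ is naturally contractible. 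If $\sigma(C)=1$, then $C$ forms a contour $(p,q)$; locating $v$ on this contour and splitting $p=p_1p_2$, $q=q_1q_2$ at $v$ identifies $C_1$ and $C_2$, up to rotation and inversion, with the two contours $(p_1,q_1)$ and $(p_2,q_2)$, each of which has $\sigma=1$; natural contractibility of $C_1$ and $C_2$ gives that $p_1,q_1$ and $p_2,q_2$ are naturally homotopic, and since natural homotopy is compatible with composition of paths, $p=p_1p_2$ and $q=q_1q_2$ are naturally homotopic, so $C$ is naturally contractible.

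\emph{Main obstacle.} The delicate steps should be, in (1), verifying that each elementary step of a natural homotopy produces an interlaced contour — in particular the degenerate case where $p$ and $q$ are two terms of a single minimal relation with no common interior vertex — and, in (2), the bookkeeping needed when $\sigma(C)=1$ to place $v$ correctly on the contour $(p,q)$ and to match $C_1,C_2$ with $(p_1,q_1)$ and $(p_2,q_2)$, together with the verification of the source counts and the ``non-oriented'' condition when $\sigma(C)>1$.
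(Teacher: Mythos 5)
The paper offers no argument for this lemma (it is declared a direct consequence of the definitions), so the only question is whether your unwinding of the definitions actually closes; in two places it does not.

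First, in part (1) with $\sigma(C)=1$, your reducibility argument rests on the claim that each elementary step $p_t=u_tv_{t1}w_t,\ p_{t+1}=u_tv_{t2}w_t$ of a natural homotopy gives an \emph{interlaced} contour $(p_t,p_{t+1})$. You flag the degenerate case yourself, but it is not a technicality you can postpone: when $u_t$ and $w_t$ are both trivial and $v_{t1},v_{t2}$ are two terms of one minimal relation with no common interior vertex, $(p_t,p_{t+1})$ is not interlaced, and with the paper's own notion of reducibility (Definition of interlaced/reducible contours, which refers only to $Q$) the implication genuinely fails: Example 1.8(c) of the paper is a cycle formed by the contour $(\alpha_1\alpha_2,\beta_1\beta_2)$ bound by the minimal relation $\alpha_1\alpha_2-\beta_1\beta_2$, hence naturally contractible, which the paper itself declares \emph{irreducible}. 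So either ``reducible in $(Q,I)$'' in the lemma must be read in the ideal-dependent sense of Assem--Liu (where a reduction step may pass through paths of a common minimal relation), or the $\sigma(C)=1$ case of the reducibility claim is simply not provable; your argument, which uses only interlacing, establishes neither.

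Second, in part (2) with $\sigma(C)>1$, you verify the inequalities of the definition by reading the bound $n$ as the length $r$ of $C$ (``their number of arrows being $<r$''). The intended bound is $\sigma(C)$, as the paper's Example 1.3 (where the bound is $2=\sigma(C)$ while the cycle has six edges) and all later uses (proofs of Theorem 1.10, Lemma 2.3, Proposition 2.5) make clear. Under that reading your choice $p=\varepsilon_v$, $w_1=C_1$, $w_2=C_2^{-1}$ need not work: the required inequalities $\sigma(C_1)<\sigma(C)$ and $\sigma(C_2)<\sigma(C)$ depend on the orientations of the four edges of $C$ at the two occurrences of the junction vertex $v$, and they can fail. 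For instance, let $C_1$ be the cycle of a contour $(p,q)$ from $v$ to $z$ bound by a minimal relation, so $\sigma(C_1)=1$ with its unique source at $v$, and let $C_2$ be a naturally contractible cycle with $\sigma(C_2)=2$ in which both edges at $v$ point into $v$; then neither occurrence of $v$ is a source of $C$, so $\sigma(C)=2=\sigma(C_2)$ and the condition $\sigma(w_2p^{-1})<\sigma(C)$ fails, while $C$ is naturally contractible only via a different choice of splitting point or a nontrivial path (here in fact via its own decomposition data, e.g.\ an inductive argument inside $C_2$). So the $\sigma(C)>1$ case of (2) needs an actual argument choosing $a$, $b$ and $p$ adapted to the orientations at $v$, not the one-line application of the definition; your $\sigma(C)=1$ case of (2) and the contractibility half of (1) are fine.
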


\subsection{Strong simply connectedness}
Let $(Q', I')$ and $(Q, I)$ be locally finite bound quivers.  We
say that $(Q', I')$ is a {\it convex bound subquiver} of $(Q, I)$
if for every points $a, b$ of $Q'$, all paths from $a$ to
$b$ in $Q$ lie in $Q'$ and $I'(a, b)=(kQ')\cap I(a, b)$. We observe that the above convex property does not depends on the presentation.\vspace{.05in}

Let $A$ be a locally bounded category. A full subcategory $B$ of $A$ is called
{ \it convex} if, for any path $x_0 \rightarrow x_1 \rightarrow \dots \rightarrow x_t$  in (the quiver of) $A$ with $x_0, x_t \in B_0$ we have that
$x_i \in B_0$,  for all $1 \leq i < t$.
The category $A$ is called { \it triangular}  if its quiver $Q_A$ contains no oriented cycle.\vspace{.05in}

A triangular locally bounded $k$-category $R \simeq kQ/I$
is said to be {\it simply connected} if for every finite convex subcategory
$C$ of $R$ and any presentation $C\simeq kQ'/I'$ we have that the fundamental group of $(Q',I')$ is trivial, see \cite{BLS, BG} and  \cite{MP}.

\begin{defn} \label{ss} A triangular locally bounded $k$-category $R \simeq kQ/I$
is said to be strongly simply connected if the following two
conditions are satisfied:
\begin{enumerate}

\item For every  two vertices $x$ and $y$ in $Q$ there are only finitely
many paths in $Q$ from $x$ to $y.$

\item Every finite convex subcategory $C$ of $R$ is simply connected.

\end{enumerate}
\end{defn}

Next, we show an example of a simply connected algebra which is not strongly simply connected.

\begin{exmp} Let $Q$ be  the quiver

$$
\xymatrix @!0 @R=1.0cm  @C=1.7cm  {
&\ar[rd]^{\beta} & &\\
   \ar[ru]^{\alpha} \ar[rd]_{\delta}  &  & \ar@< 3pt>[r]^{\mu}\ar@<-3pt>[r]_{\lambda}&\\
        & \ar[ru]_{\gamma} & &
    }
$$

\noindent and $I$ be the ideal generated by $\alpha\beta-\gamma\delta$ and $\alpha\beta\lambda-\alpha\beta\mu$.

The algebra $A \simeq kQ/I$ is  triangular. Observe that $\Pi_1(Q,I)=1$.  In fact, since $\alpha\beta \sim \gamma\delta$ then  it is left to prove that $\mu \sim \lambda$.
We know that $\alpha\beta\mu \sim \alpha\beta\lambda$ then $\beta^{-1}\alpha^{-1}\alpha\beta\mu \sim \beta^{-1}\alpha^{-1}\alpha\beta\lambda$ and therefore $\mu \sim \lambda$.

In order to see that $A$ is simply connected we shall consider all possible presentations of $A$ and prove that for each presentation the fundamental group is trivial. In this case, we have another possible presentation as follows;  $\varphi: kQ/I \rightarrow kQ/I'$  with $\varphi (I)=I'$ that maps $\alpha \rightarrow \alpha$, $\beta  \rightarrow \beta$,
$\delta \rightarrow \delta$, $\gamma \rightarrow \gamma$,  $\mu  \rightarrow  \mu+ a\lambda$ with $a \in k$ and $\lambda \rightarrow \lambda$. Let see that $\Pi_1(Q,I')=1$.

With this new presentation, we have that if we write $\mu'= \mu+ a\lambda$ then $\mu' \sim \lambda$.
Indeed,

$$
\begin{array}{lcl}
\alpha\beta\mu'  &=& \alpha\beta(\mu+ a\lambda) \\
&=& \alpha\beta\mu+ a \alpha\beta\lambda\\
&=& (a+1) \alpha\beta\mu\\
&\sim & (a+1) \alpha\beta\lambda.\\
\end{array}$$

\noindent Hence, we conclude that $\mu' \sim \lambda$ and therefore $\Pi_1(Q,I')=1$.
Note that any other possible change of presentation is similar to the one analyzed before.

On the other hand, we observe that the algebra $A$ is not strongly simply connected because there is a presentation of $A \simeq kQ/I'$ that has a full convex subquiver of the following form

$$
\xymatrix @!0 @R=1.0cm  @C=1.7cm  { &
\ar@< 3pt>[r]^{\mu}\ar@<-3pt>[r]_{\lambda} &\\
        }
$$

 \noindent which is not simply connected.
\end{exmp}


In case we deal with  a finite dimensional triangular $k$-algebra $A$ over an algebraically closed field
$k$, Definition \ref{ss} coincides with the formulation {\it if for any presentation $A \simeq kQ/I$ as a bound
quiver algebra, the fundamental group $\pi_1 (Q,I)$ of $(Q,I)$ is
trivial}, see \cite{BLS}.

\subsection{Galois coverings}

Let $A = kQ/I$ and $A'= kQ'/I'$.
Let $F :A'\rightarrow A$ be a surjective map induced from a (surjective)
quiver map $F_1 :Q \rightarrow Q'$ with $kF_{1}(I') \subset I$. We say that $F$ is a Galois covering defined by the action of
a group of automorphisms $G$ of the bound quiver $(Q', I')$ if
\begin{enumerate}
\item $Fg = F$ for any $g \in  G$;
\item $F_1^{-1}(x) = Gx', F_1^{-1}(\alpha)= G \alpha'$
for vertices $x$ in $Q$  and $x'$ in $Q'$
with $F_{1} x'=x$ and
arrows $\alpha$ in $Q$ and $\alpha'$
in $Q'$ with $F_{1} \alpha'=\alpha$;
\item G acts freely on $A'$, that is, if $g \in G$ fixes a vertex in $Q$ then
$g = 1$.
\end{enumerate}

There is a Galois covering $\widetilde{F} :\widetilde{A} = k\widetilde{Q}/ \widetilde{I} \rightarrow A = kQ/I$
defined by the action of
$\pi_1(Q_A, I )$. For any other Galois covering $F':A'= kQ'/I'
\rightarrow A$, there exists a (Galois covering)
map $\overline{F} :\widetilde{A} \rightarrow A'$
such that $F' \overline{F} = \widetilde{F}$. We illustrate the above information in the following diagram:

$$\xymatrix@+1pc{ &\widetilde{A}\ar[dd]_{\widetilde{F}}\ar[ld]^{\overline{F}}& \\
 A' \ar[rd]_{F'}                  & & \\
                   & A      &      }$$\\

\noindent Moreover, the group H of automorphisms of $\widetilde{A}$
defining $\overline{F}$ is normal in $\pi_1(Q_A, I)$ and $\pi_1(Q_A, I)/H $
is the group of automorphisms of $A'$
defining $F'$.
Therefore, $\widetilde{F} :\widetilde{A} \rightarrow A$  is called the universal Galois covering of $A$ with
respect to $(Q, I)$, see \cite{MP}.

\subsection{On naturally
contractible cycles.} The following results (theorem and corollary) were proved  by
I. Assem, S. Liu and Y. Zhang in a private communication. For the benefit of the
reader, we present their proofs here.

\begin{thm}\emph{[Assem, Liu, Zhang]}\label{thmALZ} Let $A$ be a connected
triangular locally bounded $k$-category.
If $A$ is strongly simply connected then
for any presentation $A \cong kQ_A/I_A$ of $A$ we have that every simple cycle of $Q_A$ is naturally
contractible in $(Q_A, I_A)$.\end{thm}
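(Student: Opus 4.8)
The plan is to argue by induction on the length $r$ of the simple cycle $C=c_1\cdots c_r$, and to reduce everything to the case of a simple cycle with exactly one source, i.e. a contour. The base of the argument and the whole point of the reduction step is that a contour $(p,q)$ with $p$, $q$ paths inside a \emph{convex} finite subcategory of $A$ can be handled using the simple connectedness of that subcategory. First I would set up, for a given simple cycle $C$, the full subcategory $B$ of $A$ supported on the (finitely many, by triangularity together with condition (1) of strong simple connectedness) vertices lying on paths between vertices of $C$; passing to the convex hull only enlarges the vertex set finitely, so $B$ is a finite convex subcategory of $A$, hence simply connected, and any presentation of $B$ inherited from $(Q_A,I_A)$ has trivial fundamental group. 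Since natural contractibility of $C$ in $(Q_A,I_A)$ is witnessed by minimal relations all of whose terms are paths between the relevant vertices, it suffices to prove the statement for $B$ in place of $A$; so without loss of generality $A$ itself is finite, convex in itself, and simply connected, i.e.\ $\pi_1(Q_A,I_A)=1$ for every presentation.

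Now I would split into the two cases of Definition (Assem--Liu--Zhang) according to $\sigma(C)$. If $\sigma(C)=1$, then $C$ forms a contour $(p,q)$ with $p$, $q$ paths from $a$ to $b$; since $\pi_1(Q_A,I_A)=1$ we get $p\sim q$, and the task is to upgrade homotopy to \emph{natural} homotopy. The key observation here is that $p\sim q$ in a simply connected algebra forces, after possibly restricting to a smaller convex subcategory and again using triviality of the fundamental group there, a chain $p=p_0,\dots,p_n=q$ of paths in which consecutive ones differ via a single minimal relation — this is where one invokes that in a simply connected bound quiver algebra the homotopy relation on \emph{paths} (as opposed to arbitrary walks) is generated by minimal relations, a standard fact about the ``natural'' fundamental group; the possibility of shrinking to a convex subcategory is exactly what rules out the pathological Example where $\alpha\sim\beta$ only through cancellation of $\gamma$ on the right, since such a configuration cannot live inside a genuinely simply connected convex category. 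If $\sigma(C)>1$, I would argue by induction: pick a source $a$ and a target $b$ on $C$ joined by a path $p$ in $A$ (such a path exists after enlarging to the convex hull), write $C=w_1w_2^{-1}$, and observe $\sigma(w_1p^{-1})<r$ and $\sigma(w_2p^{-1})<r$; the two cycles $w_1p^{-1}$ and $w_2p^{-1}$ are simple cycles of a finite convex (hence simply connected) subcategory, of strictly smaller source-count, so by the inductive hypothesis they are naturally contractible, and then Lemma~\ref{Lemma 1.6}(2) (or directly part (2) of the Assem--Liu--Zhang definition) gives that $C$ is naturally contractible.

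\textbf{Main obstacle.}
The genuinely delicate point is the $\sigma(C)=1$ case: passing from $p\sim q$ (homotopy of walks) to natural homotopy of the paths $p$ and $q$. The homotopy relation a priori allows insertion and cancellation of subwalks of the form $\alpha\alpha^{-1}$, so a homotopy between two \emph{paths} need not consist of paths at all, and Example 1.x (the $\alpha\gamma-\beta\gamma$ quiver) shows this really happens in general. The resolution must use strong simple connectedness in an essential way — presumably by choosing the convex subcategory carefully so that any detour through an $\alpha\alpha^{-1}$ can be straightened out using the triviality of the fundamental group of a still-smaller convex piece, and then reassembling. I expect the proof in the paper to phrase this as: in a strongly simply connected algebra, for any two parallel paths $p,q$ lying in a convex subcategory, $p$ and $q$ are naturally homotopic; this is the technical heart, and everything else is the inductive bookkeeping on $\sigma(C)$ sketched above.
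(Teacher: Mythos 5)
Your outer scaffolding (restriction to a finite convex subcategory, the split according to $\sigma(C)$, and the final appeal to Lemma~\ref{Lemma 1.6} or to part (2) of the definition) matches the shape of the paper's argument, but the two steps that carry all the weight are missing or wrongly justified. First, in the case $\sigma(C)=1$ you must pass from $p\sim q$ (which triviality of $\pi_1$ of the convex piece does give) to $p$ and $q$ being \emph{naturally} homotopic, and you invoke as ``a standard fact'' that in a simply connected bound quiver algebra the homotopy relation on parallel paths is generated by minimal relations. That is not a standard fact; it is essentially the statement to be proved, since a homotopy between two paths may a priori pass through walks that are not paths (your own discussion of the $\alpha\gamma-\beta\gamma$ example shows exactly this danger), and simply declaring that a careful choice of convex subcategories will ``straighten out'' the detours is not an argument. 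The paper does not proceed this way at all: it quotes \cite[Theorem 1.3]{AL}, namely that under strong simple connectedness every irreducible cycle of $Q_A$ is an irreducible contour that is naturally contractible, and then treats an arbitrary contour by induction on the predecessor/successor partial order on contours from \cite{AL}, splitting each interlaced contour $(p_i,p_{i+1})$ into two strictly smaller contours. All of the homotopy-to-natural-homotopy content is contained in the cited theorem; your proposal leaves it unproved, so the case $\sigma(C)=1$ (the base of everything else) is a genuine gap.

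Second, in the case $\sigma(C)>1$ you assert that a path $p$ from a vertex $a$ to a vertex $b$ of $C$ with $\sigma(w_1p^{-1})<\sigma(C)$ and $\sigma(w_2p^{-1})<\sigma(C)$ ``exists after enlarging to the convex hull''. Taking the convex hull never creates paths: it only adds vertices lying on paths that already exist in $Q_A$. If, say, $C$ has two sources and two sinks and $Q_A$ contains no path joining any two non-adjacent vertices of $C$, then no such $p$ exists, and indeed such a cycle is irreducible and not naturally contractible (and the algebra is then not strongly simply connected). The existence of the required $p$ is precisely the reducibility of $C$, and in the paper it is again deduced from \cite[Theorem 1.3]{AL}: every irreducible cycle is a contour, and a simple cycle with $\sigma(C)>1$ is not a contour, hence it is reducible. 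Without these two inputs your induction has neither a valid base nor a way to descend, so as written this is not an alternative proof but an incomplete one.
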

\begin{proof}
Let $A\cong kQ_A/I_A$
be a presentation of $A$. By \cite[Theorem 1.3]{AL}, every irreducible
cycle of $Q_A$ is an
irreducible contour that is naturally contractible in $(Q_A, I_A)$. To show that every contour
in $Q_A$ is naturally contractible in $(Q_A, I_A)$, we shall recall the partial order on
contours defined in \cite{AL} as follows. If $(p_i, q_i)$ is a contour from $a_i$ to $b_i,\; i=1, 2,$ then
$(p_2, q_2)\le (p_1, q_1)$ provided that  $(p_1, q_1)=(p_2, q_2)$ or otherwise
$(a_1, b_1)\ne (a_2, b_2)$ and $a_1$ is a predecessor of $a_2$ while $b_1$ is a successor of $b_2$.
Let $(p, q)$ be a contour in $Q_A$. If $(p, q)$ is minimal, then
$(p, q)$ is irreducible, and hence naturally contractible. Suppose that $(p, q)$ is not minimal and
every contour less than $(p, q)$ is naturally contractible. If $(p, q)$ is irreducible, then it is
naturally contractible. Otherwise there exists a sequence of paths $p=p_0, p_1, \ldots, p_n=q$ from
$s(p)$ to $e(p)$ such that $(p_i, p_{i+1})$ is a interlaced contour for all $0\le i<n.$ Write
$p_i=u_1u_2$ and $q_i=v_1v_2$ with $u_1, u_2, v_1$ and $v_2$ non-trivial paths such that
$e(u_1)=e(v_1)$. Then $(u_1, u_2)$ and $(u_2, v_2)$ are two contours less than $(p_i, q_i)$.
Therefore $(u_1, u_2)$ and $(u_2, v_2)$ are naturally contractible, and hence so is
$(p_i, p_{i+1})$. This implies that $(p, q)$ is naturally contractible.

Let now $C$ be a simple cycle of $Q$. If $\sigma(C)=1$, then $C$ forms a contour $(p, q)$. Hence
$C$ is naturally contractible by our claim.  Suppose now that $\sigma(C)>1$ and all simples
cycles $C'$ with $\sigma(C')<\sigma(C)$ are naturally contractible. Note that $C$ is reducible since
$C$ is not a contour. Therefore, there exists a path $u$ from a point $x\in C$ to another point
$y\in C$ such that $w_1p^{-1}, \; pw_2^{-1}$ are simple cycles with $\sigma(w_1p^{-1})< \sigma(C)$
and $\sigma(w_2p^{-1})<\sigma(C)$, where $w_1, w_2$ are the  reduced walks from $x$ to $y$ such that
$C=w_1w_2^{-1}$. By the inductive assumption, the  simple cycles $w_1p^{-1}, \; pw_2^{-1}$ are
naturally contractible. Therefore $C$ is naturally contractible  by definition. This completes the
proof. \end{proof}

\begin{cor}\emph{[Assem, Liu, Zhang]} \label{coroALZ} Let $A$ be a strongly simply
connected locally bounded $k$-category.
Then for any presentation $A=kQ_A/I_A$, every reduced cycle of $Q_A$
is naturally contractible in
$(Q_A, I_A)$.\end{cor}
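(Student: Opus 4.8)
The plan is to deduce Corollary~\ref{coroALZ} from Theorem~\ref{thmALZ} by reducing an arbitrary reduced cycle to its ``simple pieces''. First I would recall that a reduced cycle $C = c_1 \cdots c_r$ which is not already simple must pass through some vertex more than once; that is, there exist $1 \le i < j \le r$ with $s(c_i) = s(c_{j+1})$ (indices mod $r$, with $c_{r+1} = c_1$). Among all such repetitions I would choose one that is \emph{innermost}, meaning that the subwalk $c_i \cdots c_j$ is itself a simple cycle (no interior vertex repeated), which is always possible by picking $i,j$ with $j - i$ minimal among repetitions. Then $c_i \cdots c_j$ is a simple cycle, so by Theorem~\ref{thmALZ} it is naturally contractible; and the complementary cycle $c_{j+1} \cdots c_r c_1 \cdots c_{i-1}$ is a reduced cycle that is strictly shorter than $C$.

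The key step is then an induction on the length $r$ of $C$. The base case is when $C$ is simple, which is handled directly by Theorem~\ref{thmALZ}. For the inductive step, having split $C$ as above into an innermost simple cycle $C' = c_i \cdots c_j$ and a strictly shorter reduced cycle $C'' = c_{j+1} \cdots c_r c_1 \cdots c_{i-1}$, I would apply the induction hypothesis to $C''$ (it is naturally contractible) and Theorem~\ref{thmALZ} to $C'$ (likewise), and then invoke Lemma~\ref{Lemma 1.6}(2), which is precisely the statement that a reduced cycle splitting as a concatenation of two naturally contractible cycles is itself naturally contractible. One mild subtlety: after extracting $C'$ the remaining cycle $C''$ might fail to be reduced at the single junction where the two pieces were glued, i.e.\ one could have $c_{j+1} = c_{i-1}^{-1}$; in that case one cancels that pair (and possibly iterates), which only shortens $C''$ further, and I would note that cancelling an $\alpha\alpha^{-1}$ does not affect natural contractibility since $\alpha\alpha^{-1} \sim \varepsilon$ and the homotopy/natural-homotopy classes are unchanged. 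If after all cancellations $C''$ becomes trivial, then $C$ was itself (up to inserting $\alpha\alpha^{-1}$'s) the simple cycle $C'$ and we are already done.

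I expect the main obstacle to be exactly this bookkeeping about reducedness: one must be careful that ``innermost repeated vertex'' produces a genuinely simple cycle $C'$ and that the leftover $C''$, after trivial reductions, is still a \emph{reduced} cycle of strictly smaller length so that the induction is well-founded and Lemma~\ref{Lemma 1.6}(2) actually applies in the form stated (which requires $c_i \cdots c_j$ and the complementary piece to be naturally contractible \emph{cycles}, hence in particular nontrivial and reduced). A clean way to organize this is to induct on $r$ and, within the inductive step, first reduce $C''$ to a reduced cycle before applying the lemma, observing that all reductions preserve natural contractibility; the statement $(2)$ of Lemma~\ref{Lemma 1.6} then finishes the argument immediately. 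This yields that every reduced cycle of $Q_A$ is naturally contractible in $(Q_A, I_A)$, as claimed.
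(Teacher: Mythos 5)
Your argument is correct and follows essentially the same route as the paper: both proofs induct on the length of the reduced cycle, split a non-simple cycle at a repeated vertex into two shorter cycles, and conclude via Theorem \ref{thmALZ} (for the simple/base case) together with Lemma \ref{Lemma 1.6}(2). The only difference is cosmetic: the paper writes $C=C_1C_2$ and applies the inductive hypothesis to both pieces (simply assuming they may be taken reduced), whereas you arrange one piece to be a simple cycle via an innermost repetition and spell out the junction/cancellation bookkeeping that the paper glosses over.
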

\begin{proof}
Let $n$ be the minimal length of reduced cycles of $Q$. Let $C$ be
a reduced cycle. If $C$ is of length $n$, then $C$ is a simple
cycle. Hence $C$ is naturally contractible by Theorem
\ref{thmALZ}. Assume that the length of $C$ is greater than $n$.
If $C$ is a simple cycle, then it is naturally contractible by
Theorem \ref{thmALZ}. Otherwise we may assume that $C=C_1C_2$,
where $C_i$ are reduced cycles shorter than $C$. So each $C_i$
is naturally contractible by the inductive hypothesis. Hence $C$
is naturally contractible by Lemma \ref{Lemma 1.6}. \end{proof}

\subsection{ Algebras of the first kind.}
Let $A$ be a basic connected finite dimensional $k$-algebra with
unit, over an algebraically closed field $k$. Consider an
epimorphism of $k$-algebras $\nu:kQ_A \rightarrow A$  such that
$A\simeq kQ_A /\mbox{ker} \, \nu$. Let $(Q_A, I_\nu)$ be a
presentation of $A$. For any A-module $X$ we consider the convex
full subcategory $A(X)$ of $A$ induced by those vertices in
$\mbox{supp}\,X$. There is an induced presentation $\nu_{_X}:kQ_X
\rightarrow A(X)$ given by the restriction of $\nu$. An indecomposable $A$-module $X$ is said to be
of the first kind with
respect to $\nu$ if there is a $\widetilde{B}$-module $Y$ such
that $F'_{\lambda}Y=X$ for the push-down functor $F'_{\lambda}$
associated to the universal Galois covering $F : \widetilde{B}
\rightarrow B = A(X)$ with respect to $\nu_{_X}$. Furthermore, an algebra $A$
is said to be of the first kind with respect to $\nu_{_X}$ if for
every vertex $x$ in $Q_A$ and every indecomposable direct summand
$X$ of $\mbox{rad}P_{X}$, the module $X$ is of the first kind.
We observe that any representation-finite triangular algebra is of the first kind.

If $A$ is a triangular algebra, that is, $A=kQ_A/\mbox{ker} \, \nu$
where $Q_A$ has no oriented cycles then a  vertex $x$ in $Q_A$ is said to be
separating if for the indecomposable decomposition $\mbox{rad}P_x = M_1 \oplus \dots \oplus M_s$, there is
a decomposition into connected components $Q_A^{(x)}= Q_1 \coprod \dots \coprod Q_t$,
 with $t\geq  s$, of the
induced full subquiver $Q_A^{(x)}$ of $Q_A$ with vertices those $y$ which are not predecessors
of $x$, such that $\mbox{supp} M_i = \{y \in Q_A: M_i(y) = 0\} \subset Q_i$, for $1 \leq   i  \leq s$.

Following \cite{BLS}, the algebra $A$ is separated if every vertex $x$ in $Q_A$ is separating.

We recall that a $k$-category $A$ is schurian if $dim_k A(x, y) \leq 1$ for every couple of
vertices $x,y$ in $Q_A$.\\

Finally, we state the main result given by the authors in \cite{CP}, since it will be useful for our further considerations.\\

\noindent{\bf Theorem}(\cite[Theorem 2.3]{CP}) Let $A$ be a triangular algebra of the first kind with respect to the presentation $\upsilon: kQ_A \rightarrow A$. Consider the
universal covering $F :\widetilde{A}_\upsilon \rightarrow A$. Then, the following conditions hold.
\begin{enumerate}
\item The fundamental group $\pi_1(Q_A, I_\upsilon)$ is free.
\item If $A$ is schurian then $\widetilde{A}_\upsilon$ is separated.
\end{enumerate}

For unexplain notions in this subsection we refer the reader to \cite{CP}.

\section{The main Result}

The aim of this section is to prove our main result. We shall prove
that given a triangular algebra $A$ which admits a strongly simply
connected Galois covering for a given presentation of $A$ then
the fundamental group associated is a free group.

We start this section proving some preliminaries results in order to
get the main result.

\begin{lem} \label{lema1} Let $Q$ be a locally finite quiver and $\gamma$ be a
reduced cycle in $Q$. Assume that there exist two vertices $a, b$
in $\gamma$, a path $p$ from $a$ to $b$ and walks $w_1, w_2$ from
$a$ to $b$ such that $\gamma =w_1 w_2 ^{-1}$, where $w_1 p
^{-1}$ and $w_2 p ^{-1}$ are reduced cycles. Then,
exactly one of the following statements hold.
\begin{enumerate}
\item[(a)] If $a$ is a sink and $b$ is a source in $\gamma$ then $\sigma (w_1 p^{-1}) + \sigma (w_2 p^{-1})= \sigma(\gamma)-1$.

\item[(b)] If $a$ is not a sink and $b$ is
a source in $\gamma$ or either if $a$ is a sink and $b$ is not a source in
$\gamma$ then $\sigma (w_1 p^{-1}) + \sigma (w_2 p^{-1})=
\sigma(\gamma)$.
\item[(c)] If neither $a$ is a sink nor $b$ is
a source in $\gamma$ then $\sigma (w_1 p^{-1}) + \sigma (w_2 p^{-1})=
\sigma(\gamma)+1$.
\end{enumerate} \end{lem}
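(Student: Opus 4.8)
The plan is to recast the statement as a small amount of orientation bookkeeping on the sequences of arrows and formal inverses of arrows that spell out the three cycles $\gamma$, $w_1p^{-1}$ and $w_2p^{-1}$ (all three closed at $a$), comparing these sequences one junction at a time. Concretely, I would first record the elementary description of $\sigma$: if $C=c_1\cdots c_r$ is a reduced cycle, written with $c_0=c_r$ and $c_{r+1}=c_1$, then $\sigma(C)$ is the number of indices $i$ for which $c_i$ is the formal inverse of an arrow while $c_{i+1}$ is an arrow, since precisely at such a vertex $e(c_i)=s(c_{i+1})$ do two arrows start. For a reduced walk $w=c_1\cdots c_m$, not necessarily closed, let $\sigma_0(w)$ be the number of such indices with $1\le i<m$; then $\sigma_0(w)=\sigma_0(w^{-1})$, since the condition involves only the unordered pair of consecutive letters. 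When a cycle is written as a cyclic concatenation of walks, $\sigma$ of the cycle equals the sum of the $\sigma_0$ of the pieces plus, for each junction vertex $x=e(u)=s(v)$, a term equal to $1$ if the last letter of $u$ is a formal inverse and the first letter of $v$ is an arrow, and $0$ otherwise. Set $s^{+}(w)=1$ if $w$ begins with an arrow and $0$ otherwise, and define $e^{+}(w)$ analogously from the last letter of $w$.

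I would then feed our three cycles into this scheme. Since $p$ is a path, all its letters are arrows, hence all letters of $p^{-1}$ are formal inverses; consequently $\sigma_0(p^{-1})=0$, the junction at $b$ in $w_ip^{-1}$ carries no source, and the junction at $a$ there carries a source exactly when $s^{+}(w_i)=1$. Therefore
\[
  \sigma(w_ip^{-1})=\sigma_0(w_i)+s^{+}(w_i),\qquad i=1,2 .
\]
For $\gamma=w_1w_2^{-1}$, using $\sigma_0(w_2^{-1})=\sigma_0(w_2)$ and the fact that the first (resp.\ last) letter of $w_2^{-1}$ is the formal inverse of the last (resp.\ first) letter of $w_2$, inspecting the junctions at $a$ and at $b$ yields
\[
  \sigma(\gamma)=\sigma_0(w_1)+\sigma_0(w_2)+s^{+}(w_1)s^{+}(w_2)+\bigl(1-e^{+}(w_1)\bigr)\bigl(1-e^{+}(w_2)\bigr).
\]
Subtracting, and using $s^{+}(w_1)+s^{+}(w_2)-s^{+}(w_1)s^{+}(w_2)=1-(1-s^{+}(w_1))(1-s^{+}(w_2))$, the three identities collapse to
\[
  \sigma(w_1p^{-1})+\sigma(w_2p^{-1})=\sigma(\gamma)+1-(1-s^{+}(w_1))(1-s^{+}(w_2))-(1-e^{+}(w_1))(1-e^{+}(w_2)).
\]

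To finish I would identify the two remaining products with the hypotheses of the lemma. Reading the edges of $\gamma$ at $a$ shows that $a$ is a sink of $\gamma$ if and only if both $w_1$ and $w_2$ begin with a formal inverse, i.e.\ $(1-s^{+}(w_1))(1-s^{+}(w_2))=1$; symmetrically, $b$ is a source of $\gamma$ if and only if both $w_1$ and $w_2$ end with a formal inverse, i.e.\ $(1-e^{+}(w_1))(1-e^{+}(w_2))=1$. Hence $\sigma(w_1p^{-1})+\sigma(w_2p^{-1})=\sigma(\gamma)+1-\chi_a-\chi_b$, where $\chi_a=1$ if $a$ is a sink of $\gamma$ and $0$ otherwise, and $\chi_b=1$ if $b$ is a source of $\gamma$ and $0$ otherwise. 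The four values of $(\chi_a,\chi_b)$ produce $\sigma(\gamma)-1$, $\sigma(\gamma)$, $\sigma(\gamma)$, $\sigma(\gamma)+1$, which are precisely the three mutually exclusive and exhaustive alternatives $(a)$, $(b)$, $(c)$ of the statement.

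The only point I expect to need real care is this orientation bookkeeping at the junctions: for each of $w_1$, $w_2$, $p$ and for their reversals and compositions, tracking whether the first and last letters are arrows or formal inverses, and whether such a pair produces a source at the junction in question. Once one also verifies that the mildly degenerate configurations cause no trouble — which they do not, since $\sigma$ and $\sigma_0$ are counted over positions in the defining sequences rather than over distinct vertices — the rest is routine.
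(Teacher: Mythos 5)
Your proof is correct and follows essentially the same route as the paper: everything reduces to a local analysis of which junctions of the three cycles carry a source, with the only changes occurring at the two vertices $a$ and $b$. The paper works through the four configurations (whether $a$ is a sink, whether $b$ is a source) by separate diagrams, whereas you package them into the single identity $\sigma(w_1p^{-1})+\sigma(w_2p^{-1})=\sigma(\gamma)+1-\chi_a-\chi_b$ via indicator functions of the first and last letters of $w_1,w_2$ — a tidier but not substantively different argument.
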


\begin{proof} $(a)$  Suppose that $a$ is a sink and $b$ is a source in
$\gamma$. Then, we have the following situation:

$$\xymatrix@+2pc{
 \ar[r] \ar@/_1pc/@{~}[d]_{w_1} & a \ar@{~>}[d]_p                   & \ar[l] \ar@/^1pc/@{~}[d]^{w_2} \\
                   & b \ar[r]  \ar[l]           &  }$$\\

\noindent Any source in $\gamma$ which is different from $b$
is a source in $w_1 p^{-1}$ or a source in $w_2 p^{-1}.$ Clearly,
the sources in $w_1 p^{-1}$ and also in $w_2 p^{-1}$ are sources in
$\gamma$. Thus, $\sigma(w_1 p^{-1})+ \sigma(w_2 p^{-1})=
\sigma(\gamma) -1$.

$(b)$ Suppose that $a$ is not a sink and that $b$ is a source in
$\gamma$. Then, the following situation holds:

$$\xymatrix@+2pc{
 \ar@{-}[r] \ar@/_1pc/@{~}[d]_{w_1} & a \ar@{~>}[d]_p    \ar[r]               &  \ar@/^1pc/@{~}[d]^{w_2} \\
                   & b \ar[r]  \ar[l]           &  }$$\\

\noindent The sources in $\gamma$ which are different from $b$ are either
sources in $w_1 p^{-1}$ or in $w_2 p^{-1}.$  The vertex $b$ is no
longer a source in $w_1 p^{-1}$  neither in $w_2 p^{-1}$. But, $a$
is a source of one of the cycles $w_1 p^{-1}$  or $w_2 p^{-1}$.
Moreover, $a$ is a source in  both mentioned cycles if $a$ is a
source in $\gamma$. Clearly, all sources in $w_1 p^{-1}$ and all
sources in $w_2 p^{-1}$  which are different from $a$ are sources
of $\gamma$. Thus, $\sigma(w_1 p^{-1})+ \sigma(w_2 p^{-1})=
\sigma(\gamma)$.

Now, suppose that $a$ is a sink and that $b$ is not a source in
$\gamma$. Then, we may assume that we have the following diagram:

$$\xymatrix@+2pc{
 \ar[r] \ar@/_1pc/@{~}[d]_{w_1} & a \ar@{~>}[d]_p                   & \ar[l] \ar@/^1pc/@{~}[d]^{w_2} \\
                   & b   \ar@{-}[l]           & \ar[l] }$$\\

\noindent The sources in $\gamma$ coincide with the sources in
$w_1 p^{-1}$ and also with the sources in $w_2 p^{-1}.$ Hence, $\sigma(w_1 p^{-1})+
\sigma(w_2 p^{-1})= \sigma(\gamma)$.

$(c)$ Suppose that $a$ is not a sink and $b$ is not a source
in $\gamma$. Then, we have one of the following situations:

$$\xymatrix@+2pc{
 \ar@{-}[r] \ar@/_1pc/@{~}[d]_{w_1} & a \ar@{~>}[d]_p   \ar[r]   & \ar@/^1pc/@{~}[d]^{w_2} &\text{or}&
  \ar@{-}[r] \ar@/_1pc/@{~}[d]_{w_1} & a \ar@{~>}[d]_p  \ar[r]   &  \ar@/^1pc/@{~}[d]^{w_2}\\
                   & b  \ar@{-}[l]           & \ar[l] &&
     \ar[r]              & b \ar@{-}[r]          &  }$$\\

\noindent The sources different from $a$ in $w_1 p^{-1}$ and in $w_2 p^{-1}$
are sources of $\gamma$. The vertex $a$ is a
source in one of the cycles $w_1 p^{-1}$ or $w_2 p^{-1}$ (both if
$a$ is a source in $\gamma$). Then, $\sigma(w_1 p^{-1})+
\sigma(w_2 p^{-1})= \sigma(\gamma)+1$.
\end{proof}

\begin{prop}  \label{prop2} Let $kQ_A/I_A$ be a presentation of a triangular
algebra $A$ and $F:(\widetilde{Q}_A, \widetilde{I}_A) \rightarrow
(Q_A, I_A)$ be a Galois covering of bound quivers. Let  $\gamma$
be a simple cycle in $\widetilde{Q}_A$ of the form

$$\xymatrix@+2pc{
 x_1 \ar@{~>}[d] \ar@{~>}[rd] & x_2 \ar@{~>}[d] \ar@{~>}[ld] \\
 y_1                  & y_2 }$$\\

\noindent where $x_i \rightsquigarrow y_j$  are paths for $1 \leq
i,j \leq 2$, $F(x_1)= F(x_2)=x$ and  $F(y_1)= F(y_2)=y$. Then,
$\gamma$ is irreducible. Consequently, $\gamma$ is not naturally
contractible in $(\widetilde{Q}_A, \widetilde{I}_A)$.
\end{prop}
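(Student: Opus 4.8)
The plan is to assume that $\gamma$ is reducible and to extract from this a nontrivial oriented cycle in $Q_A$, contradicting the triangularity of $A$. Two consequences of the covering hypothesis will be used throughout. Since $F$ is induced by a quiver morphism $F_1\colon\widetilde Q_A\to Q_A$ carrying arrows to arrows, it preserves the length of paths; as $Q_A$ has no oriented cycle, neither does $\widetilde Q_A$. Hence: \emph{(A)} there is no path in $\widetilde Q_A$ from any $y_i$ to any $x_j$, since composing such a path with the given path $x_j\rightsquigarrow y_i$ would produce a nontrivial oriented cycle in $\widetilde Q_A$; in particular the vertices $x_1,x_2,y_1,y_2$ are pairwise distinct. \emph{(B)} There is no nontrivial path $u\rightsquigarrow v$ in $\widetilde Q_A$ with $F(u)=F(v)$, for its image under $F$ would be a nontrivial oriented cycle at $F(u)$ in $Q_A$. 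I also record that $\gamma$, being a simple cycle of the displayed shape, has exactly the sources $x_1,x_2$ and exactly the sinks $y_1,y_2$, so $\sigma(\gamma)=2$ (and in particular $\gamma$ is not a contour); by simplicity, each of its vertices lies on one of the four composing paths $x_i\rightsquigarrow y_j$, and is therefore a successor of some $x_i$ and a predecessor of some $y_j$.

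Suppose now $\gamma$ is reducible. As $\sigma(\gamma)=2>1$, this means there are vertices $a,b$ of $\gamma$, a path $p\colon a\rightsquigarrow b$, and reduced walks $w_1,w_2$ from $a$ to $b$ with $\gamma=w_1w_2^{-1}$ such that $w_1p^{-1}$ and $pw_2^{-1}$ are reduced cycles with $\sigma(w_1p^{-1})<2$ and $\sigma(pw_2^{-1})<2$; thus $\sigma(w_1p^{-1})\le 1$, $\sigma(pw_2^{-1})\le 1$, and the sum of these two numbers is at most $2$. If $a=b$, then $p$ is trivial — a path from $a$ to $a$ in a quiver without oriented cycles — and, since the simple cycle $\gamma$ meets $a$ only once, one of $w_1,w_2$ is trivial while the other equals $\gamma$ up to orientation; then one of $\sigma(w_1p^{-1}),\sigma(pw_2^{-1})$ equals $\sigma(\gamma)=2$, which is absurd. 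So $a\neq b$, and in particular $p$ is nontrivial.

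Now I apply Lemma~\ref{lema1} to $\gamma$ with the data $a,b,p,w_1,w_2$, using that $\sigma(pw_2^{-1})=\sigma(w_2p^{-1})$ (reversing a reduced cycle exchanges its sources and its sinks, which occur in equal number). In case \emph{(c)} of the lemma, $\sigma(w_1p^{-1})+\sigma(w_2p^{-1})=\sigma(\gamma)+1=3$, contradicting that this sum is at most $2$. In case \emph{(a)}, $a$ is a sink and $b$ a source of $\gamma$, so $a=y_i$ and $b=x_j$ for some $i,j$, whence $p$ is a path $y_i\rightsquigarrow x_j$ — impossible by \emph{(A)}. In case \emph{(b)}, either $b$ is a source of $\gamma$, say $b=x_j$, and then (with $a$ a successor of some $x_k$) a path $x_k\rightsquigarrow a$ followed by $p$ is a nontrivial path $x_k\rightsquigarrow x_j$ with $F(x_k)=x=F(x_j)$, contradicting \emph{(B)}; or $a$ is a sink of $\gamma$, say $a=y_i$, and then (with $b$ a predecessor of some $y_l$) the path $p$ followed by a path $b\rightsquigarrow y_l$ is a nontrivial path $y_i\rightsquigarrow y_l$ with $F(y_i)=y=F(y_l)$, again contradicting \emph{(B)}. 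Since every case leads to a contradiction, $\gamma$ is irreducible.

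Finally, a naturally contractible cycle is reducible by Lemma~\ref{Lemma 1.6}(1), so the irreducibility of $\gamma$ immediately gives that $\gamma$ is not naturally contractible in $(\widetilde Q_A,\widetilde I_A)$. I expect the main obstacle to be case \emph{(b)}: the source count of Lemma~\ref{lema1} does not by itself rule it out (the two summands may each equal $1$), and it is precisely there that the hypotheses $F(x_1)=F(x_2)$, $F(y_1)=F(y_2)$ — via observation \emph{(B)} — become essential.
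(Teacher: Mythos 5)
Your proof is correct and follows essentially the same route as the paper's: assume $\gamma$ reducible, apply Lemma~\ref{lema1} to the source count $\sigma(\gamma)=2$, and in the surviving case use $F(b)=x$ or $F(a)=y$ together with the paths through every vertex of $\gamma$ to produce an oriented cycle in $Q_A$, contradicting triangularity. You are in fact slightly more careful than the paper (handling $a=b$, ruling out cases (a) and (c) explicitly, and citing Lemma~\ref{Lemma 1.6} for the ``consequently'' clause), but the underlying argument is the same.
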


\begin{proof} Assume that $\gamma$ is reducible. Then, there exist two
points $a, b$ in $\gamma$, a path $p:a \rightsquigarrow b$ and walks
$w_1$ and $w_2$ from $a$ to $b$  such that
$\gamma = w_1 w_2^{-1}$, $\sigma(w_1 p^{-1}) < \sigma(\gamma) =2$
and $\sigma(w_2 p^{-1})< \sigma(\gamma) =2$.

Since $A$ is triangular, the cycles $w_1 p^{-1}$ and $w_2 p^{-1}$ are contours
and we have that $\sigma(w_1 p^{-1}) + \sigma(w_2 p^{-1}) = \sigma(\gamma)$.
By Lemma \ref{lema1}, we have that either $b$ is a source or $a$ is a sink.
Therefore, $F(b)=x$ or $F(a)=y$. Since for all vertex $z$ of $\gamma$ we have
a path in $Q_A$ from $x$ to $y$ going through $F(z)$,  then we claim that in both cases, the path
$F(p)$ lies in a cycle. In fact, we have $x \rightsquigarrow F(a) \rightsquigarrow F(b) = x$
or $y=F(a) \rightsquigarrow F(b) \rightsquigarrow y$. This yields the desired contradiction,
since $A$ is triangular. Hence, $\gamma$ is irreducible.
\end{proof}

We define  the distance between two points $x, y \in Q_A$ as follows:

$$d(x,y)= max \{ \ell(w) \mid w:x \rightsquigarrow y \;\;\mbox{is
a path} \}.$$

\noindent where $\ell(w)$ means the length of the path $w$. Note that the distance between two points in $Q_A$ is well defined since $A$ is triangular.\vspace{.1in}

Now we prove a technical lemma useful for further considerations.

\begin{lem}  \label{lema3} Let $kQ_A/I_A$ be a presentation of a triangular algebra $A$ and
$F:(\widetilde{Q}_A, \widetilde{I}_A) \rightarrow (Q_A, I_A)$ be a
Galois covering of bound quivers. Let $(u,v)$, $(u, w)$ and $(v,w)$ be torsion contours
from $x$ to $y$ in $Q_A$ of minimum distance $d(x,y)$. Let
$\gamma$ be a simple cycle in $\widetilde{Q}_A$ of the form

$$\xymatrix@+2pc{
 x_1 \ar@{~>}[d]_{w_2} \ar@{~>}[r]^{u_1} & y_2 & x_3 \ar@{~>}[d]^{w_1} \ar@{~>}[l]_{v_1} \\
 y_1 & x_2 \ar@{~>}[r]_{u_2} \ar@{~>}[l]^{v_2} & y_3 }$$\\

\noindent where $u_1, u_2, v_1, v_2, w_1$ and $w_2$ are paths,
$F(x_i)=x,$  $F(y_i)= y$ for $i=1, 2, 3$, $F(u_1)= F(u_2)=u$,
$F(v_1)= F(v_2)=v$ and $F(w_1)= F(w_2)=w$.
Then, $\gamma$ is not naturally contractible in
$(\widetilde{Q}_A, \widetilde{I}_A)$.
\end{lem}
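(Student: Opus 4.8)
The plan is a proof by contradiction patterned on the proof of Proposition~\ref{prop2}, with the extra combinatorics forced by $\sigma(\gamma)=3$. Suppose $\gamma$ were naturally contractible. Since $\gamma$ is simple, its sources are $x_1,x_2,x_3$ and its sinks $y_1,y_2,y_3$, so $\sigma(\gamma)=3>1$, and the Assem--Liu--Zhang definition yields a path $p\colon a\rightsquigarrow b$ with $a,b$ vertices of $\gamma$ and reduced walks $w'_1,w'_2$ from $a$ to $b$ with $\gamma=w'_1(w'_2)^{-1}$, $\sigma(w'_1p^{-1})<3$, $\sigma(w'_2p^{-1})<3$, and with both $w'_1p^{-1},w'_2p^{-1}$ naturally contractible non-oriented cycles. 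One first checks that $p$ is non-trivial --- otherwise $a=b$ and, $\gamma$ being simple, one of $w'_1,w'_2$ is trivial and the other equals $\gamma$, against $\sigma(w'_ip^{-1})<3=\sigma(\gamma)$ --- and that $\widetilde{Q}_A$ is triangular, an oriented cycle upstairs projecting to one downstairs, so each $w'_ip^{-1}$ is a reduced cycle with $\sigma\in\{1,2\}$. Throughout I use that every vertex $z$ of $\gamma$ lies on one of $u_1,v_1,w_1,u_2,v_2,w_2$, so $F(z)$ lies on a path from $x$ to $y$ in $Q_A$, with $F(z)=x$ (resp.\ $F(z)=y$) exactly when $z$ is a source (resp.\ a sink) of $\gamma$.

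Next I distinguish cases by whether $a$ is a sink of $\gamma$ and whether $b$ is a source, i.e.\ by the trichotomy of Lemma~\ref{lema1}. If $a$ is a sink and $b$ a source, then $F(a)=y$, $F(b)=x$, and $F(p)\colon y\rightsquigarrow x$ together with $u\colon x\rightsquigarrow y$ is an oriented cycle in $Q_A$, impossible. If exactly one of the two holds, say $b$ a source, then $F(b)=x$ and $x\rightsquigarrow F(a)\overset{F(p)}{\rightsquigarrow}x$ is an oriented cycle (symmetrically if $a$ is a sink, via $y=F(a)\overset{F(p)}{\rightsquigarrow}F(b)\rightsquigarrow y$); this is exactly the argument of Proposition~\ref{prop2}. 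So we must be in Lemma~\ref{lema1}(c): $a$ is not a sink and $b$ is not a source, whence $\sigma(w'_1p^{-1})+\sigma(w'_2p^{-1})=\sigma(\gamma)+1=4$, so $\sigma(w'_1p^{-1})=\sigma(w'_2p^{-1})=2$.

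In this case the sources of $w'_ip^{-1}$ are the sources of $\gamma$ interior to $w'_i$ together with $a$, when $a$ is a source of $w'_ip^{-1}$; all sources of $\gamma$ map to $x$, and $a$ maps to $x$ iff $a$ is a source of $\gamma$, with the dual statement for sinks and $b,y$. Running through the possibilities for $a$ (a source of $\gamma$, or a passing vertex --- it is not a sink here) and for $b$ (a sink of $\gamma$, or passing), one finds that unless $a,b$ are both passing with the forward step out of $a$ and the forward step into $b$ on opposite arcs, one of $w'_1p^{-1},w'_2p^{-1}$ --- say $w'_1p^{-1}$ after interchanging --- is a simple cycle all of whose sources map to $x$ and all of whose sinks map to $y$, i.e.\ a cycle of the shape treated in Proposition~\ref{prop2}; then Proposition~\ref{prop2} makes $w'_1p^{-1}$ irreducible, hence not naturally contractible by Lemma~\ref{Lemma 1.6}(1), contradicting the decomposition. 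It remains to exclude the exceptional position, and a short incidence check on the hexagon shows that it forces $a$ and $b$ to lie in the interiors of the two lifts of one of $u,v,w$; since the statement is symmetric under cyclically relabelling $(u,v,w)$ and their lifts, we may assume $a$ is interior to $u_1$ and $b$ interior to $u_2$.

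\textbf{Main obstacle.} Excluding this configuration is the hard part, and is where the torsion contours and the minimality of $d(x,y)$ are used, the projection argument of Proposition~\ref{prop2} no longer sufficing. Now $F(a),F(b)$ are interior vertices of $u$. If $F(a)=F(b)$ then $F(p)$ is a non-trivial oriented cycle; if $F(b)$ precedes $F(a)$ on $u$ then $F(p)$ together with the subpath of $u$ from $F(b)$ to $F(a)$ is one --- both impossible. So $F(a)$ strictly precedes $F(b)$ on $u$; write $u=A\mu B$ with $A\colon x\rightsquigarrow F(a)$, $\mu\colon F(a)\rightsquigarrow F(b)$, $B\colon F(b)\rightsquigarrow y$. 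By unique path lifting $F(p)\ne\mu$ (else $p$ would run inside $u_1$ and end on $u_1$, not on the disjoint path $u_2$), so $(\mu,F(p))$ is a genuine contour from $F(a)$ to $F(b)$. Computing $F(w'_2p^{-1})$, which is the loop $A^{-1}wv^{-1}A\mu F(p)^{-1}$ at $F(a)$, and using that $w'_2p^{-1}$ is naturally contractible, hence contractible, hence has contractible image, one obtains $\mu F(p)^{-1}\sim A^{-1}vw^{-1}A$ in $(Q_A,I_A)$; thus $\overline{\mu F(p)^{-1}}$ is conjugate in $\pi_1(Q_A,I_A)$ to $\overline{vw^{-1}}$, which has finite order because $(v,w)$ is a torsion contour, so $(\mu,F(p))$ is a torsion contour. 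But $F(a),F(b)$ are interior to $u$, so $d(F(a),F(b))\le d(x,y)-2<d(x,y)$, contradicting the minimality of $d(x,y)$. (In the relabelled subcases, where $a,b$ lie in the lifts of $v$, resp.\ of $w$, the same computation uses the torsion contour $(u,w)$, resp.\ $(u,v)$ --- which is why all three are needed.) This contradiction completes case (c) and the proof.
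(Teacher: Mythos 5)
Your proposal is correct and follows essentially the same route as the paper's proof: assume natural contractibility, use Lemma \ref{lema1} (after ruling out $F(a)=y$, $F(b)=x$ by triangularity) to force $\sigma(\delta_1p^{-1})=\sigma(\delta_2p^{-1})=2$, dispose of every configuration except $a,b$ interior to the two lifts of one of $u,v,w$ via Proposition \ref{prop2}, and then project the contractible piece to obtain a torsion contour between interior vertices of $u$ (with $(v,w)$ supplying the finite order), contradicting the minimality of $d(x,y)$. The only differences are organizational: your case bookkeeping replaces the paper's ``$b\in u_2$ or $b\in w_1$'' reduction, you make explicit the conjugation/finite-order step that the paper abbreviates as $v^{-1}w\sim 1$, and you note why all three torsion contours are hypothesized, which the paper leaves implicit in its ``without loss of generality''.
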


\begin{proof}
Assume that $\gamma$ is naturally contractible. Then, there exist
vertices $a, b$ in $\gamma$, a path $p$ and
walks $\delta_1$ and $\delta_2$ all of them from $a$ to $b$  such that $\gamma
= \delta_1 \delta_2^{-1}$, $\sigma(\delta_1 p^{-1}) <
\sigma(\gamma) =3$ and $\sigma(\delta_2 p^{-1})< \sigma(\gamma)
=3$. Moreover, $\delta_1 p^{-1}$ and $\delta_2 p^{-1}$ are
naturally contractible reduced cycles. Since for all vertex $z$ in
$\gamma$ there is a path from $x$ to $y$ in $Q_A$ going through
$F(z)$ and $A$ is triangular then we may assume that
$F(a)\neq y$ and  $F(b)\neq x$. Otherwise, if $F(a)=y$ or $F(b)=x$, the path $F(p)$ lies in a cycle getting a contradiction.

Then, $a$ is not a sink in $\gamma$ and $b$ is not a source in $\gamma$.
Since $A$ is triangular, $\widetilde{Q}_A$ has no oriented cycles and $\sigma(\delta_i p^{-1})\geq 1$, for $i=1,2$.
Then by Lemma \ref{lema1}, we have that
$\sigma(\delta_1 p^{-1}) + \sigma(\delta_2 p^{-1}) = \sigma({\gamma})+1 = 4$.
Therefore, it follows  from the fact that $\sigma(\delta_i p^{-1}) < 3$, for $i=1,2$
that $\sigma(\delta_i p^{-1}) = 2$, for $i=1,2$.

Without loss of generality we may assume that $a$ is a vertex
of $u_1$. It is not hard to see that either $b$ is in $u_2$ or $b$
in $w_1.$  Indeed if $b \in v_1$ then there exists a cyclic permutation of $\delta_1 p^{-1}$ which is a contour contradicting the number of sources. A similar analysis holds if $b \in v_2$.

Now, if $b$ is in $w_1$ then we have a diagram as follows

$$\xymatrix{
 x_1 \ar@{~>}[d]_{w_2} \ar@{~>}[r]^{u_1} & a \ar@{~>}[rrd]^{p} \ar@{~>}[r]& y_2 & x_3 \ar@{~>}[d]^{w_1} \ar@{~>}[l]_{v_1} \\
  \ar@{~>}[d] & &  &  b \ar@{~>}[d] \\
 y_1&\ar@{~>}[l] & x_2 \ar@{~>}[r]_{u_2} \ar@{~>}[l]^{v_2}& y_3 }$$

\noindent then either one of the naturally
contractible cycles $\delta_1 p^{-1}$ or $\delta_2 p^{-1}$ is a  simple
cycle of the form:

$$\xymatrix@+2pc{
 x_1 \ar@{~>}[d] \ar@{~>}[rd] & x_2 \ar@{~>}[d] \ar@{~>}[ld] \\
 y_1                  & y_3 }$$\\

\noindent Then, Proposition \ref{prop2} yields the desired contradiction.

Now, if $b$ is in $u_2$, then Proposition \ref{prop2} implies that $F(a), F(b)\notin \{x, y\}$. Since $A$ is triangular,
there exist paths $q_1: x \rightarrow F(a)$, $q_2: F(a) \rightarrow F(b)$ and  $q_3: F(b) \rightarrow y$ such that $u=q_1 q_2 q_3$.

We claim that $(F(p), q_2)$ is a torsion contour, where $F(p):F(a) \rightarrow F(b)$ is a path in $Q_A$. Observe that in $\widetilde{Q}_A$,
for $i=1,2,3$ there exist paths $q'_i$ and $q''_i$ such that
$F(q'_i)=F(q''_i)=q_i,$ $u_1=q'_1 q'_2 q'_3$
and  $u_2=q''_1 q''_2 q''_3.$ Hence,  we may assume
that $\delta_1= (q'_1)^{-1} w_2 v_2^{-1}q''_1 q''_2$ and $\delta_2= q'_2 q'_3 (v'_1)^{-1} w_1 {q''_{3}}^{-1}.$
By definition
$\delta_i p^{-1} \sim 1$ for $i=1,2$,  since they are naturally contractible reduced cycles. Thus,
$F(\delta_i) F( p^{-1}) \sim 1$ for $i=1,2$. Therefore,
$$q_2^{-1}F( p) \sim q_2^{-1} F(\delta_2)= q_2^{-1}q_2 q_3 v^{-1} w q_3^{-1} \sim  q_3 v^{-1} w q_3^{-1}.$$

Since $(v,w)$ is a torsion contour then $v^{-1} w \sim 1$. Hence, we infer that $(F(p),q_2)$ is
also a torsion contour because $(F(p),q_2)$  is a contour such that $(q_2^{-1}F( p))^{2} \sim 1$ getting a contradiction to the minimality of $(v,w)$. Therefore,
$\gamma$ is not naturally contractible in $(\widetilde{Q}_A, \widetilde{I}_A)$.
\end{proof}

\begin{lem}  \label{lema4} Let $kQ_A/I_A$ be a presentation of a triangular algebra $A$ and
$F:(\widetilde{Q}_A, \widetilde{I}_A) \rightarrow (Q_A, I_A)$ be a
Galois covering of bound quivers with $\widetilde{A} \simeq
k\widetilde{Q}_A/\widetilde{I}_A$ strongly simply connected. Let $(u,v)$ be a torsion contour
from $x$ to $y$ in $Q_A$ of minimum distance $d(x,y)$. Then, there
exists a simple cycle $\gamma$ in $\widetilde{Q}_A$ of the form
$$\xymatrix@+1pc{
 x_1 \ar@{~>}[d]_{u_1} \ar@{~>}[rd]^{v_1} & x_2 \ar@{~>}[d]_{u_2} \ar@{~>}[rd]^{v_2} & &\cdots& x_{n-1} \ar@{~>}[rd]_{v_{n-1}} & x_n \ar@{~>}[d]^{u_n} \ar@{~>}[llllld]_{v_n}\\
 y_1                  & y_2 & &\cdots& & y_n}$$\\

\noindent where $n$ is the order of the contour $(u,v)$, $u_i:x_i
\rightsquigarrow y_i$ and  $v_i:x_i \rightsquigarrow y_{i+1}$ are
paths in $\gamma$  with $y_{n+1}=y_1$, $F(u_i) = u$,
$F(v_i) = v$,
$F(x_i)=x$ and  $F(y_i)= y$ for $i=1, \dots, n$.
\end{lem}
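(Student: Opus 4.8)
The plan is to build the cycle $\gamma$ explicitly inside $\widetilde{Q}_A$ by lifting the closed walk $(uv^{-1})^n$ through the Galois covering $F$. Since $(u,v)$ is a torsion contour of order $n$, the walk $c = (uv^{-1})^n$ is contractible in $(Q_A,I_A)$, hence a closed walk at $x$ representing the trivial element of $\pi_1(Q_A,I_A)$. Fixing a vertex $x_1 \in \widetilde{Q}_A$ with $F(x_1)=x$, the covering $F$ then admits a unique lift $\widetilde{c}$ of $c$ starting at $x_1$, and because $c$ is contractible this lift is again a \emph{closed} walk at $x_1$ (this is exactly the defining property of the universal covering and of Galois coverings in general: contractible closed walks lift to closed walks, non-contractible ones do not). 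Decompose $\widetilde{c}$ along the $n$ copies of $uv^{-1}$: write $\widetilde{c} = (u_1 v_1^{-1})(u_2 v_2^{-1})\cdots(u_n v_n^{-1})$, where $u_i$ is the lift of $u$ and $v_i$ the lift of $v$ in the $i$-th segment. Setting $x_i = s(u_i)$ and $y_i = e(u_i)$, the labels satisfy $F(u_i)=u$, $F(v_i)=v$, $F(x_i)=x$, $F(y_i)=y$ automatically, and the end of each $v_i^{-1}$ is the start $x_{i+1}$ of the next segment, with $x_{n+1}=x_1$; since $e(u_i)$ and $e(v_i)$ lie over $y$, one gets $e(v_i)=y_{i+1}$ (with $y_{n+1}=y_1$). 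This is precisely the combinatorial shape displayed in the statement.

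Next I would reduce $\widetilde{c}$ to a reduced closed walk and argue that what remains is in fact a \emph{simple} cycle of the claimed form. Reduction (cancelling $\alpha\alpha^{-1}$ pairs) cannot shorten the walk: since $A$ is triangular, $u$ and $v$ are honest non-trivial paths sharing only their endpoints $x,y$ (replace $(u,v)$ by a sub-contour if $u,v$ are interlaced, but minimality of $d(x,y)$ forbids this — so $u,v$ meet only at $x$ and $y$), and in $\widetilde{Q}_A$, which is also triangular and acyclic, no cancellation occurs at a junction $x_i$ or $y_i$ because at $x_i$ both adjacent steps $u_i, v_{i-1}$ point \emph{away} from $x_i$ (it is a source of the sub-walk) and at $y_i$ both point \emph{towards} $y_i$ (it is a sink). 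Hence $\gamma := \widetilde{c}$ is already reduced, with $\sigma(\gamma)=n$ sources $x_1,\dots,x_n$. Simplicity — that all the $x_i$ and $y_i$ and all intermediate vertices are pairwise distinct — follows from minimality of the order $n$: if two of the segments shared an interior or boundary vertex, one could either splice out a shorter contractible sub-loop of the form $(uv^{-1})^m$ with $1<m<n$ contradicting that $n$ is the order, or produce a torsion contour of strictly smaller distance, contradicting the minimality of $d(x,y)$. This distinctness argument is the one place the hypotheses on minimal distance and minimal order are both used in an essential way.

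The main obstacle I anticipate is precisely this last point: ruling out \emph{all} possible coincidences among the $3n$-ish vertices appearing in $\gamma$, i.e.\ showing that the lifted loop $\widetilde{c}$ does not accidentally self-intersect. One must carefully distinguish the cases (a) two $x_i$'s coincide, (b) two $y_i$'s coincide, (c) an interior vertex of some $u_i$ or $v_i$ coincides with another such interior vertex or with some $x_j$ or $y_j$. In each case the strategy is the same: a coincidence lets one cut $\widetilde{c}$ into two closed sub-walks, project one of them down by $F$ to get either a contractible loop $(uv^{-1})^m$ with $0<m<n$ (killing minimality of the order) or a new torsion contour between vertices strictly closer together than $d(x,y)$ (killing minimality of the distance); a subtle sub-case is when the coincidence occurs \emph{within} a single segment $u_iv_i^{-1}$, where one instead gets that $(u,v)$ was itself interlaced or that $u$ or $v$ had a proper sub-contour, again contradicting minimality of $d(x,y)$. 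Once distinctness is secured, $\gamma$ is visibly a simple cycle with $\sigma(\gamma)=n$ arranged exactly as in the diagram, and $F$ restricts to the stated labelling, which completes the proof.
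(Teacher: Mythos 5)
Your overall strategy is the same as the paper's: lift the contractible walk $(uv^{-1})^n$ through $F$ to a closed walk of the displayed shape starting at a chosen $x_1$, identify the labels $F(u_i)=u$, $F(v_i)=v$, $F(x_i)=x$, $F(y_i)=y$, and then rule out self-intersections by a case analysis that plays the minimality of the order $n$ against the minimality of the distance $d(x,y)$. The lifting step and the combinatorial bookkeeping are fine.

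The genuine gap is exactly at the point you yourself flag as the main obstacle. When you cut the lifted walk at a repeated vertex and project by $F$, all you get a priori is a closed walk in $Q_A$; nothing yet says that $(uv^{-1})^{j-i}$ is \emph{contractible}, nor that the contour between the intermediate vertices is a \emph{torsion} contour. You justify this via the principle ``contractible closed walks lift to closed walks, non-contractible ones do not'', but the second half is false for a general Galois covering: any loop whose class lies in the kernel of $\pi_1(Q_A,I_A)\rightarrow G$ lifts to a closed walk without being contractible. The hypothesis that actually closes this hole --- and which your proposal never invokes --- is that $\widetilde{A}$ is strongly simply connected: by Corollary~\ref{coroALZ} every reduced cycle of $\widetilde{Q}_A$ is (naturally) contractible, and since $F$ carries minimal relations to minimal relations, these homotopies push down to $(Q_A,I_A)$. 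This is how the paper gets $(uv^{-1})^{j-i}\sim 1$ when $x_i=x_j$ (contradicting the order), and, in the crucial case of a coincidence between an interior vertex of some $u_i$ and of some $v_j$, the relations $(uv^{-1})^{j-1}\sim q_1p_1^{-1}$ and $(uv^{-1})^{n-j}\sim p_2^{-1}q_2$ (with $u=p_1p_2$, $v=q_1q_2$ cut at $F(a)$), from which $(p_1,q_1)$ is shown to be a torsion contour with $d(x,F(a))<d(x,y)$, contradicting minimality of the distance. Note also that the paper handles a coincidence inside two lifts of the \emph{same} path $u$ by a different mechanism: since $A$ is triangular, $u$ is a simple path, so the arrows of $u_i$ and $u_j$ ending at the common vertex have equal image and coincide by unique lifting, forcing $x_i=x_j$; your uniform splicing argument can be adapted to this case too, but again only after strong simple connectedness has been brought in. Without these homotopy relations the contradictions you announce do not materialize, so as written the simplicity part of your proof is incomplete.
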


\begin{proof} Since $(u,v)$ is a contour of $Q_A$ of order $n$ then $(u v ^{-1})^{n} \sim 1.$
Moreover, there exists a reduced cycle $\gamma$ in
$\widetilde{Q}_A$ of the desired form. It remains to
prove that $\gamma$ is simple. Suppose that $\gamma$ is not
simple. Then, there exists a vertex $a$ which appears in $\gamma$
at least two times. We analyze all the possible cases where
such a vertex can appear in $\gamma$. \vspace{.05in}

{\it Case 1:} Consider $a=x_i=x_j$ with $i \neq j$. Without loss of generality we may assume that $i<j$. Then, in
$\widetilde{Q}_A$ we have the following situation:

$$\xymatrix@+1pc{
 x_i = x_j \ar@{~>}[d]_{u_j} \ar@{~>}[rd]^{v_i} & x_{i+1} \ar@{~>}[d]_{u_{i+1}} \ar@{~>}[rd] & &\cdots&x_{j-2} \ar@{~>}[rd] & x_{j-1} \ar@{~>}[d]^{u_{j-1}} \ar@{~>}[llllld]_{v_{j-1}}\\
 y_j                  & y_i+1 & &\cdots& & y_{j-1}}$$\\


Hence $(u v ^{-1})^{j-i} \sim 1,$ a contradiction to the fact that the order of $(u,v)$ is $n$ and $i<j<n$. \vspace{.05in}

{\it Case 2:} Consider $a=x_{i}$ for exactly one $i$. Since $a$ appears at least two times in $\gamma$,
then there exist $j \neq i$ and a path from $x_j$ to $a$ in
$\widetilde{Q}_A.$

Note that such a path induces a cycle in $Q_A$ since $F(a)=
F(x_i)=F(x_j)=x$. This fact leads to a contradiction since
$A$ is triangular.

To analyze the others cases we can assume that $F(a) \neq x$. Similarly, we can also assume that
$F(a) \neq y$. \vspace{.05in}

{\it Case 3:} Consider $a \in u_i \cap u_j$ (or $a \in v_i \cap v_j$) with $i \neq j$.
Since $F(a) \neq x$ there exist two arrows $\alpha: b \rightarrow
a$ and $\beta: c \rightarrow a$ such that $u_i= w_i\alpha w'_i$
and $u_j= w_j \beta w'_j.$  We may assume that $b \neq c$. Since
$F(u_i)= F(u_j)=u$  where $u$ is a simple path and
$e(F(\alpha))= F(a)= e(F(\beta))$ we have that
$F(\alpha)=F(\beta)$.

On the other hand, $e(\alpha)=a=e(\beta)$. Thus, $\alpha=\beta$ a contradiction to our assumption. \vspace{.05in}

{\it Case 4:} Consider $a \in u_i \cap v_j$. Then, there exists a path
$p_1:x \rightsquigarrow F(a)$, $p_2:F(a) \rightsquigarrow y$ and $q_1:x
\rightsquigarrow F(a)$, $q_2:F(a) \rightsquigarrow y$ in $Q_A$ such that
$u=p_1p_2$ and $v=q_1q_2.$

Without loss of generality, we may assume that $i=1$. Then, in $\widetilde{Q}_A$
we have the following situation:

$$\xymatrix@+2pc{
 y_2 \ar@/_1pc/@{~}[d] & x_1 \ar@{~>}[l]_{v_1} \ar@{~>}[r]^{p_1^1} & a \ar@{~>}[r]^{p_2^1} \ar@{~>}[rd]_{q_2^j} & y_1 & x_n \ar@{~>}[l]_{v_n} \ar@/^1pc/@{~}[d] \\
 y_j & x_j \ar@{~>}[l]_{u_j} \ar@{~>}[ru]_{q_1^j} &   & y_{j+1} & x_{j+1} \ar@{~>}[l]_{u_{j+1}} }$$\\


\noindent where $p_1^{1}, \;p_2^{1}, \;q_1^{j}$ and $ q_2^{j}$ are
paths.

Then, $q_1^{-1}(uv^{-1})^{j-1}p_1 \sim 1$ and
$p_2(uv^{-1})^{n-j}q_2^{-1} \sim 1$ since $\widetilde{A}$ is strongly simply connected. Thus, $(uv^{-1})^{j-1} \sim
q_1 p_1^{-1}$ and $(uv^{-1})^{n-j} \sim p_2^{-1}q_2.$

Observe that $1< j < n$. Thus, $2\leq j \leq n-1$. Since $(u,v)$ is a
contour in $Q_A$ of order $n$, then we have that
$(p_1,q_1)$ is a torsion contour contradicting the
minimality of the distance of $(u,v)$. Hence, we prove that
$\gamma$ is a simple cycle in $\widetilde{Q}_A$.
\end{proof}

\begin{prop}  \label{prop5} Let $kQ_A/I_A$ be a presentation of a triangular algebra $A$ and
$F:(\widetilde{Q}_A, \widetilde{I}_A) \rightarrow (Q_A, I_A)$ be a
Galois covering of bound quivers with $\widetilde{A} \simeq
k\widetilde{Q}_A/\widetilde{I}_A$ strongly simply connected. If
$(u,v)$ is a contour in $Q_A$ then $(u,v)$ is not of torsion.
\end{prop}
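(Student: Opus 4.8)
The plan is to argue by contradiction, reducing everything to the ``fan'' configurations of Proposition \ref{prop2} and Lemma \ref{lema4}. Suppose that $Q_A$ admits a torsion contour. Among all torsion contours of $Q_A$ choose one, say $(u,v)$ from a vertex $x$ to a vertex $y$, for which the distance $d(x,y)$ is minimal, and let $n\ge 2$ be its order. Since $\widetilde A\simeq k\widetilde Q_A/\widetilde I_A$ is triangular and strongly simply connected, Theorem \ref{thmALZ} (applied in the connected component of $\widetilde Q_A$ under consideration) tells us that every simple cycle of $\widetilde Q_A$ is naturally contractible. On the other hand, Lemma \ref{lema4} produces a \emph{simple} cycle $\gamma$ in $\widetilde Q_A$ of the $n$-fan shape displayed there, with its $n$ sources $x_1,\dots,x_n$ lying over $x$, its $n$ sinks $y_1,\dots,y_n$ lying over $y$, and $\sigma(\gamma)=n$. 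Hence $\gamma$ is naturally contractible, and the whole point is to see that this cannot happen.

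If $n=2$, this is immediate: $\gamma$ is exactly a cycle of the type treated in Proposition \ref{prop2}, so it is irreducible, and therefore not naturally contractible by Lemma \ref{Lemma 1.6}(1) --- a contradiction. If $n\ge 3$, I would unwind the definition of natural contractibility: there are a path $p\colon a\rightsquigarrow b$ between vertices of $\gamma$ and reduced walks $\delta_1,\delta_2\colon a\rightsquigarrow b$ with $\gamma=\delta_1\delta_2^{-1}$, $\sigma(\delta_ip^{-1})<n$, and both $\delta_1p^{-1}$ and $\delta_2p^{-1}$ naturally contractible reduced cycles. As in the proofs of Proposition \ref{prop2} and Lemma \ref{lema3}, triangularity of $A$ forces $F(a)\ne y$ and $F(b)\ne x$ (otherwise $F(p)$ would be a subpath of an oriented cycle of $Q_A$, since every vertex of $\gamma$ lies on a path from $x$ to $y$ downstairs); thus $a$ is not a sink and $b$ is not a source of $\gamma$, so by Lemma \ref{lema1}(c) we get $\sigma(\delta_1p^{-1})+\sigma(\delta_2p^{-1})=\sigma(\gamma)+1=n+1$, with each summand equal to at least $2$ (as $\widetilde Q_A$ is triangular and each summand is $<n$). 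I would then run the same kind of case analysis as in Lemma \ref{lema3}, according to which spokes $u_1,v_1,\dots,u_n,v_n$ of $\gamma$ the vertices $a$ and $b$ lie on: putting $b$ on a ``$v$-spoke'' yields a cyclic rearrangement of $\delta_1p^{-1}$ or of $\delta_2p^{-1}$ that is a contour, contradicting its number of sources; when $a$ and $b$ lie on ``$u$-spokes'' with $F(a),F(b)\notin\{x,y\}$, pushing the relations $\delta_ip^{-1}\sim 1$ down through $F$ and using that $\widetilde A$ is strongly simply connected produces a contour $(F(p),q)$ from $F(a)$ to $F(b)$ with $(q^{-1}F(p))^{2}\sim 1$, that is, a torsion contour, and since $F(a)\ne x$ and $F(b)\ne y$ one has $d(F(a),F(b))<d(x,y)$, contradicting the minimality of $(u,v)$; finally, in the remaining extremal sub-cases one of $\delta_1p^{-1}$, $\delta_2p^{-1}$ is itself a two-source cycle lying over $(x,y)$ of the type of Proposition \ref{prop2}, hence not naturally contractible. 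Every branch ends in a contradiction, so $(u,v)$ is not of torsion, which is the assertion of the proposition.

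The main obstacle is this $n\ge 3$ step: handling the $n$-fan for arbitrary $n$ (not only $n=3$, as in Lemma \ref{lema3}) and verifying that every way of naturally contracting it forces either a torsion contour of strictly smaller distance downstairs or one of the irreducible two-source configurations of Proposition \ref{prop2}. Depending on how comfortably the relations push through $F$, it may be cleanest to organise this as a secondary induction on $n$, using the sub-cycles $\delta_1p^{-1}$ and $\delta_2p^{-1}$ --- which have strictly fewer sources than $\gamma$ and are again ``fans over a contour'' --- as the inductive objects.
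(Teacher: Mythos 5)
Your overall strategy matches the paper's up to a point: minimal-distance torsion contour $(u,v)$, the lift to the simple $n$-fan via Lemma \ref{lema4}, natural contractibility from Corollary \ref{coroALZ}, the exclusion $F(a)\neq y$, $F(b)\neq x$ by triangularity, and the count $\sigma(\delta_1p^{-1})+\sigma(\delta_2p^{-1})=n+1$ with each summand in $\{2,\dots,n-1\}$ from Lemma \ref{lema1}; your $n=2$ case via Proposition \ref{prop2} is also fine. The genuine gap is the $n\ge 3$ step, and the specific contradictions you propose do not hold for general $n$. If $b$ lies on a $v$-spoke, no cyclic rearrangement of $\delta_1p^{-1}$ or $\delta_2p^{-1}$ need be a contour: already for $n=4$, with $a$ interior to $u_1$ and $b$ interior to $v_2$, the two sub-cycles have $2$ and $3$ sources, consistent with everything, so there is no ``contradiction in the number of sources.'' Likewise your ``extremal sub-cases'' claim fails: for $F(a)=x$, $F(b)=y$, say $a=x_1$ and $b=y_j$, the sub-cycles have $j-1$ and $n-j+2$ sources, and for $n\ge 5$ and $4\le j\le n-1$ both exceed $2$, so neither is a two-source cycle of the type of Proposition \ref{prop2}. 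Your fallback, a secondary induction on $n$, has no workable inductive statement as formulated: the sub-cycles $\delta_ip^{-1}$ are not fans attached to the contour $(u,v)$ --- they involve the new path $p$, whose image $F(p)$ is a path from $F(a)$ to $F(b)$ about which you know nothing homotopically unless you push the contraction data down through $F$.

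That push-down is exactly the missing mechanism, and it is the heart of the paper's proof. One writes $F(w_1)=\gamma_{F(a)}^{\,r}\gamma_1$ and $F(w_2)=\gamma_{F(a)}^{\,-s}\gamma_2$ with $r+s+1=n$, and the natural contractions $\delta_ip^{-1}\sim 1$ give $F(p)\gamma_1^{-1}\sim\gamma_{F(a)}^{\,r}$ and $F(p)\gamma_2^{-1}\sim\gamma_{F(a)}^{\,r+1}$. Then: if $F(a)\neq x$ or $F(b)\neq y$, factoring $u$ (or $u$ and $v$) through $F(a)$, $F(b)$ produces a torsion contour between vertices strictly closer than $x$ and $y$, contradicting the minimality of $d(x,y)$ (note the order of this torsion contour is a divisor related to $r$, not $2$ in general --- your claim $(q^{-1}F(p))^2\sim 1$ is special to the hypotheses of Lemma \ref{lema3}); if instead $F(a)=x$ and $F(b)=y$ (the case your triangularity reduction does not remove), one concludes $r,s\neq 0$ and hence that $(F(p),u)$ and $(F(p),v)$ are torsion contours from $x$ to $y$ of the same minimal distance, and then one assembles a simple cycle with three sources whose spokes map to $u$, $v$ and $F(p)$ and invokes Lemma \ref{lema3} --- which is precisely what that lemma was proved for, and which your plan never uses in this case. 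Without these two ingredients every branch with $n\ge 3$ (indeed already some with $n=4,5$) remains open, so the proposal as it stands does not prove the proposition.
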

\begin{proof} Assume that  there exists $(u',v')$ a torsion contour in $Q_A$.
Consider the set $$S=\{(u',v')\mid (u',v') \text{ is a torsion contour in }Q_A \}.$$
Let $(u,v) \in S$ be a contour in $Q_A$ from $x$ to $y$ with minimal distance $d(x,y)$, that is, an element of $S$ starting and ending in points such that their distance is minimal related to all the starting and ending points of the contours in $S$.

We claim that  $(u,v)$ is reduced. In fact, otherwise there exist paths $\alpha, \beta,  u'$ and $v'$ in $Q_A$
such that $u=\alpha u' \beta$ and $v=\alpha v' \beta.$

Note that either $\alpha$ or $\beta$ can be trivial paths but not both. Therefore,
$$uv^{-1} \sim \alpha u' (v')^{-1} \alpha^{-1}$$where $(u',v')$ is a torsion contour.
This fact contradicts the minimality of $(u,v)$. Thus the contour
$(u,v)$ is reduced.

Consider $\gamma =u v^{-1}$  starting in $x$ and suppose that the order of $(u,v)$ is $n$ with $n >1$.
By Lemma \ref{lema4}, we have a simple cycle $\widetilde{\gamma}$ in $\widetilde{Q}_A$ of the form:

$$\xymatrix@+1pc{
 x_1 \ar@{~>}[d]_{u_1} \ar@{~>}[rd]^{v_1} & x_2 \ar@{~>}[d]_{u_2} \ar@{~>}[rd]^{v_2} & &\cdots& x_{n-1}\ar@{~>}[rd]_{v_{n-1}} & x_n \ar@{~>}[d]^{u_n} \ar@{~>}[llllld]_{v_n}\\
 y_1                  & y_2 & &\cdots& & y_n}$$\\

On the other hand, since $\widetilde{A}$ is strongly simply connected, by Corollary \ref{coroALZ}
we know that $\widetilde{\gamma}$ is naturally contractible in $(\widetilde{Q}_A, \widetilde{I}_A).$

Since, $\sigma(\widetilde{\gamma})=n >1$ then there exist a path $p:a \rightsquigarrow b$, and two walks
$w_1$ and $w_2$ from $a$ to $b$ such that $\widetilde{\gamma}= w_1 w_2 ^{-1}$ and $w_1 p ^{-1}$,
$w_2 p ^{-1}$ are naturally contractible reduced cycles with $\sigma(w_1 p ^{-1}) <n $  and $\sigma(w_2 p ^{-1}) <n.$

Therefore, there exist walks $\gamma_1, \gamma_2$ in $Q_A$ from
$F(a)$ to $F(b)$ such that $\gamma_{_{F_{a}}} = \gamma_1 \gamma_2^{-1}.$ Observe that the cycle $\gamma_{_{F_{a}}}$ is a cycle starting in $F_{a}$, but the underlying graphs of both cycles $\gamma$ and $\gamma_{_{F_{a}}}$ are the same.
Then, $F(w_1) = \gamma_{_{F_{a}}}^{r}\gamma_1$ and $F(w_2) =
\gamma_{_{F_{a}}}^{-s}\gamma_2$ for some positive integer $r$ and $s$. Since,
$\gamma_{_{F_{a}}}^{n}= F(w_1 w_2^{-1}) =
\gamma_{_{F_{a}}}^{r}\gamma_1\gamma_2^{-1}\gamma_{_{F_{a}}}^{s}= \gamma_{_{F_{a}}}^{r+s+1}$ we
infer that $r+s+1=n$.

Now, since by definition $F(p) \sim F(w_1)= \gamma_{F_{a}}^{r}\gamma_1$ and $F(p) \sim F(w_2)= \gamma_{_{F_{a}}}^{-s}\gamma_2$ then
$F(p) \gamma_1^{-1} \sim  \gamma_{_{F_{a}}}^{r}$ and $F(p) \gamma_2^{-1} \sim  \gamma_{_{F_{a}}}^{-s}= \gamma_{_{F_{a}}}^{r+1-n} \sim \gamma_{_{F_{a}}}^{r+1}.$

Next, we analyze all the possible cases  where $F(a)$ and $F(b)$ may appear in $(u,v)$. \vspace{.05in}

{\it Case 1:} Assume that $F(a)=x$ and $F(b)=y.$ Without loss of
generality we may assume that $a=x_1$ and $b=y_{i+1}$ for some $i
\geq 1$ and,  moreover that $\gamma_1=u$ and
$\gamma_2=v$. Furthermore, $r \neq 0$ and $s \neq 0$ since otherwise
$\sigma(w_1 p^{-1})=1$ or $\sigma(w_2 p^{-1})=1$, a contradiction.

Therefore, $(F(p),u)$  and $(F(p),v)$ are torsion contours in
$Q_A$.  In fact, $F(p)u^{-1} = F(p)\gamma_1^{-1} \sim \gamma^{r}.$ Since $\gamma$ coincides with $\gamma_{F_{a}}$ and $\gamma^{r}$ is of torsion then $(F(p),u)$ is a torsion contour. With a similar analysis we can get that $(F(p),v)$ is a torsion contour.

Hence in  $\widetilde{Q}_A$ we have the following reduced cycle $C$

$$\xymatrix@+2pc{
 x_n \ar@{~>}[d]_{p'} \ar@{~>}[r]^{v_n} & y_1 & x_1=a \ar@{~>}[d]^{p} \ar@{~>}[l]_{u_1} \\
 y_i & x_i \ar@{~>}[l]_{u_i} \ar@{~>}[r]^{v_i} & y_{i+1}=b }$$\\

\noindent where $p'$ is obtained shifting $p$ by the respective group element.

By Corollary \ref{coroALZ}, we have that $\mathcal{C}$ is
naturally contractible in $(\widetilde{Q}_A, \widetilde{I}_A),$ a
contradiction to Lemma \ref{lema3}. Thus, $F(a) \neq x$ or
$F(b)\neq y.$ \vspace{.05in}

{\it Case 2:} Consider $F(a) \neq x$ or  $F(b)\neq y.$  We may assume
that $a$ is a vertex in $u_1$. Then, we have to consider two cases
if either $b$ is a vertex in $u_i$ for some $i \neq 1$ or $b$ is a
vertex in $v_i$ for some $i.$

Let first analyze when $b$ is a vertex in $u_i$ for some
$i \neq 1$, otherwise if $i=1$ then $\sigma(w_2 p)=n$ a contradiction.

Since  $F(p)$ is a path in $Q_A$ from $F(a)$ to $F(b)$
and $A$ is a triangular algebra  then there are paths $\delta_1: x
\rightarrow F(a),$  $\delta_2: F(a) \rightarrow F(b)$ and $\delta_3: F(b)
\rightarrow y$ such that $u=\delta_1\delta_2\delta_3$.

We may assume that $\gamma_1=\delta_2$ and $\gamma_2 = \delta_1^{-1} v \delta_3^{-1}.$
Hence, $$F(p) \delta_2^{-1}= F(p) \gamma_1^{-1} \sim
\gamma_{a}^{r}.$$

\noindent  This implies that $(F(p),\delta_2)$ is a  torsion contour in $Q_A$,
getting a contradiction with the minimality of $(u,v).$

On the other hand, if $b$ is a vertex in $v_i$ for some $i$, then
there are paths  $\eta_1: x \rightarrow F(a),$  $\eta_2: F(a)
\rightarrow y$, $\varphi_1: x \rightarrow F(b)$ and  $\varphi_2: F(b)
\rightarrow y$,  such that $u=\eta_1\eta_2$ and $v=\varphi_1\varphi_2$.

We can assume that $\gamma_1 = \eta_1^{-1} \varphi_1$ and  $\gamma_2 = \eta_2
\varphi_2^{-1}.$ We recall that $F(p) \sim \gamma_{a}^r\gamma_1$ and $F(p) \sim \gamma_{a}^{r+1}\gamma_2$,
for some $0\leq r \leq n-1$.
Therefore, replacing $\gamma_1$ by $\eta_1^{-1} \varphi_1$ we get that

$$\eta_1 F(p) \varphi_1^{-1} \sim \eta_1 \gamma_{_{F_{a}}}^{r} \gamma_1 \varphi_1^{-1} = \eta_1 \gamma_{_{F_{a}}}^{r} \eta_1^{-1}$$

\noindent and replacing $\gamma_2$ by $\eta_1^{-1} \varphi_1$ we get that

$$
\begin{array}{lcl}
F(p)\varphi_2 \eta_2^{-1} &\sim & \gamma_{_{F_{a}}}^{r+1} \gamma_2 \varphi_2 \eta_2^{-1}\\
&\sim & \gamma_{_{F_{a}}}^{r+1}.\\
\end{array}$$

We conclude that $(\eta_1 F(p), \varphi_1)$ or $(F(p)\varphi_2, \eta_2)$ are
torsion contours a contradiction to the minimality of $(u, v)$.
\end{proof}

\begin{thm} \label{teo6} Let $A$ be a triangular algebra admitting a
strongly simply connected Galois covering for a given presentation
$kQ_A/I_A.$ Then, the fundamental group of $(Q_A, I_A)$ is
torsion-free.
\end{thm}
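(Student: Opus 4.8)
The plan is to derive the statement from Proposition~\ref{prop5}. That proposition says there is no torsion contour in $Q_A$, so it suffices to prove the contrapositive of the theorem in the form: \emph{if $\pi_1(Q_A,I_A)$ contains a nontrivial element of finite order, then $Q_A$ admits a torsion contour}. So suppose $g\in\pi_1(Q_A,I_A)$ has order $n$ with $1<n<\infty$.

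Represent $g$ by a closed walk at some vertex; applying the cancellations $\alpha\alpha^{-1}\sim\varepsilon$ this walk reduces, within its homotopy class, to a reduced walk, which is nontrivial since $g\ne 1$. Thus $g$ is realized, up to conjugacy, by a reduced cycle of $Q_A$. Among all reduced cycles $\gamma$ of $Q_A$ whose class has finite order $>1$, choose one with $\sigma(\gamma)$ minimal and, among those, of minimal length; after a cyclic rotation (which only conjugates the class, hence preserves its order) we may assume $s(\gamma)$ is a source of $\gamma$. Since $A$ is triangular, $Q_A$ has no oriented cycle, so $\sigma(\gamma)\ge 1$. If $\sigma(\gamma)=1$, then $\gamma$ forms a contour $(u,v)$ with $\gamma=uv^{-1}$; here $(uv^{-1})^{n}$ is contractible while $uv^{-1}=\gamma$ is not, so $(u,v)$ is a torsion contour, and we are done by Proposition~\ref{prop5}. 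The whole content of the argument is therefore to exclude $\sigma(\gamma)\ge 2$.

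For this I would pass to the strongly simply connected covering $F$. Since $\gamma^{\,n}$ is contractible it lifts from a vertex over $s(\gamma)$ to a \emph{closed} walk $\widehat{\gamma}$ in $\widetilde{Q}_A$; because the covering transports the cyclic reducedness of $\gamma^{\,n}$ upstairs, $\widehat{\gamma}$ is a nontrivial reduced cycle, and one checks that $\sigma(\widehat{\gamma})=n\,\sigma(\gamma)>1$. By Corollary~\ref{coroALZ}, $\widehat{\gamma}$ is naturally contractible in $(\widetilde{Q}_A,\widetilde{I}_A)$; as $\sigma(\widehat{\gamma})>1$, there are a path $p$ and reduced walks $w_1,w_2$, all from a vertex $a$ to a vertex $b$ of $\widehat{\gamma}$, with $\widehat{\gamma}=w_1w_2^{-1}$, $\sigma(w_1p^{-1})<\sigma(\widehat{\gamma})$, $\sigma(w_2p^{-1})<\sigma(\widehat{\gamma})$, and with $w_1p^{-1}$, $w_2p^{-1}$ naturally contractible reduced cycles. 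Since $A$ is triangular, $F(p)$ cannot be a closed walk, which restricts the positions of $F(a)$ and $F(b)$; pushing this splitting down along $F$ and using $F(w_i)\sim F(p)$, a case analysis on where $a$ and $b$ lie among the $n$ copies of a lift of $\gamma$ inside $\widehat{\gamma}$ — entirely in the spirit of Lemmas~\ref{lema3} and \ref{lema4} and their use of Lemma~\ref{lema1} — yields either a torsion contour in $Q_A$, contradicting Proposition~\ref{prop5}, or a non-contractible reduced cycle in $Q_A$ whose class still has finite order $>1$ but with strictly fewer sources than $\gamma$, contradicting the minimality of $\sigma(\gamma)$. Either way we obtain the required contradiction, so $\sigma(\gamma)=1$, and the theorem follows. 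I expect this last case analysis to be the main obstacle: keeping the pieces $w_ip^{-1}$ reduced, controlling the number of sources of their $F$-images (free reduction downstairs may collapse sources), and guaranteeing that one of the extracted downstairs cycles genuinely has fewer sources than $\gamma$, together with the degenerate subcases where $p$ is trivial or where $F(a),F(b)$ meet the endpoints of a lifted copy of $\gamma$, are exactly what makes the argument delicate and are what the preceding lemmas were designed to handle.
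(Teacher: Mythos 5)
Your outline follows the paper's own strategy: pick a torsion class $\overline{\gamma}$ with $\sigma(\gamma)$ minimal, lift the contractible power $\gamma^{\,n}$ to a closed reduced cycle in $\widetilde{Q}_A$, split that cycle using Corollary~\ref{coroALZ} (or reducibility) into $w_1,w_2,p$, push down along $F$, and play Lemma~\ref{lema1} and Proposition~\ref{prop5} against the minimality of $\sigma(\gamma)$; your treatment of the case $\sigma(\gamma)=1$ (a torsion contour, excluded by Proposition~\ref{prop5}) is correct and even more explicit than the paper's.

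However, the exclusion of $\sigma(\gamma)\ge 2$ --- which is the entire content of the theorem beyond Proposition~\ref{prop5} --- is not actually proved: you assert that a case analysis ``yields either a torsion contour or a smaller torsion cycle'' and you yourself list the unresolved points (keeping the pieces reduced, controlling the sources of the $F$-images, producing a downstairs cycle with strictly fewer sources). That is a genuine gap, because those points are precisely where the work lies; moreover the appeal to Lemmas~\ref{lema3} and~\ref{lema4} is misplaced, since those are ingredients of the proof of Proposition~\ref{prop5}, not of this step. What is missing is the concrete bookkeeping the paper performs: write $\gamma_{F(a)}=\gamma_1\gamma_2^{-1}$ for the cyclic rotation of $\gamma$ at $F(a)$, note that $F(w_1)=\gamma_{F(a)}^{\,r}\gamma_1$ and $F(w_2)=\gamma_{F(a)}^{-s}\gamma_2$ with $r+s+1=n$, and deduce from the contractibility of $w_ip^{-1}$ that $F(p)\gamma_1^{-1}\sim\gamma_{F(a)}^{\,r}$ and $F(p)\gamma_2^{-1}\sim\gamma_{F(a)}^{-s}$; then Lemma~\ref{lema1} bounds $\sigma(\gamma_1F(p^{-1}))+\sigma(\gamma_2F(p^{-1}))$ in terms of $\sigma(\gamma)$, and the identities $\sigma(w_1p^{-1})=r\,\sigma(\gamma)+\sigma(\gamma_1F(p^{-1}))$, $\sigma(w_2p^{-1})=s\,\sigma(\gamma)+\sigma(\gamma_2F(p^{-1}))$ together with $\sigma(w_ip^{-1})<n\,\sigma(\gamma)$ force $\sigma(\gamma_iF(p^{-1}))<\sigma(\gamma)$; finally, since each piece is homotopic to a power of the torsion element $\gamma$, Proposition~\ref{prop5} and the minimality of $\sigma(\gamma)$ force $r=s=0$, i.e.\ $n=1$, the desired contradiction. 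Without this derivation --- in particular without the argument that the downstairs pieces really do have fewer sources, which you flag as an open difficulty in your own sketch --- the proof is incomplete.
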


\begin{proof} Suppose that $\pi_1(Q_A, I_A)$ is not torsion-free.
Let $\overline{\gamma}$ be a torsion element of the group $\pi_1(Q_A, I_A)$
with $\sigma(\gamma)$ minimum. Observe that $\gamma$ is a cycle in
$Q_A$ starting at the base point $x$ of the group $\pi_1(Q_A, I_A)$ and that $\gamma^n \sim 1$ for some $n \geq 2$.

Consider $\widetilde{A} \simeq k\widetilde{Q}_A/\widetilde{I}_A$ the Galois
covering of the fixed presentation of $A$.  Then, there exists a
cycle $\widetilde{\delta}$ in $\widetilde{Q}_A$ such that
$F(\widetilde{\delta})=\gamma^n$.

Now, if $\widetilde{\gamma}$ is
irreducible then by \cite[Theorem 1.3]{AL} we infer that $\widetilde{\gamma}$
is a contour. Furthermore, $\gamma^n$ is also a contour. Thus,
$n=1$ a contradiction to our assumption.
Then, $\widetilde{\gamma}$ is reducible. Since $\widetilde{\delta}$ is not a
contour there exists in $Q_A$ a path $p:a \rightsquigarrow
b,$ two walks $w_1$ and $w_2$ from $a$ to $b$ such that
$\widetilde{\delta} = w_1 w_2^{-1}$, $\sigma(w_1 p^{-1}) <
\sigma(\widetilde{\delta}) = n \sigma(\gamma)$ and $\sigma(w_2
p^{-1})< \sigma(\widetilde{\delta}) = n \sigma(\gamma)$. Then,
there exist in $Q_A$ walks $\gamma_1$ and $\gamma_2$ from $F(a)$
to $F(b)$ such that $\gamma_{_{F(a)}}= \gamma_1 \gamma_2^{-1}$, where $\gamma_{_{F(a)}}$ is a cyclic permutation of $\gamma$. Observe that
$F(w_1)= \gamma_{_{F(a)}}^r \gamma_1$ and $F(w_2)= \gamma_{_{F(a)}}^{-s} \gamma_2$
where $r+s+1=n$.

By Lemma \ref{lema1}, we have that
$$ \mid \sigma( \gamma_1 F(p^{-1})) + \sigma( \gamma_2 F(p^{-1}))-
\sigma( \gamma_{_{F(a)}}) \mid \leq 1.$$

Note that $\sigma( \gamma_{_{F(a)}})=\sigma( \gamma)$.
Suppose that $ \sigma( \gamma_1 F(p^{-1})) = \sigma( \gamma_{_{F(a)}})$.
Then, $ \mid \sigma( \gamma_2 F(p^{-1})) \mid \leq 1 .$
Since $A$
is triangular we infer that $\mid \sigma( \gamma_2 F(p^{-1})) \mid
= 1$ and therefore $ \gamma_2 F(p^{-1})$ is a contour. Moreover, $
\gamma_2 F(p^{-1}) \sim \gamma_{_{F(a)}}^s$. In fact, since $w_2 p^{-1}$ is contractible due to Corollary \ref{coroALZ}, so $F(p) \sim F(w_2)$ where $F(w_2)= \gamma_{_{F(a)}}^{-s} \gamma_2$. Then $w_2 p^{-1} \sim 1$. Therefore, $\gamma_{_{F(a)}})^s \sim \gamma_2 F(p^{-1})$.

By Proposition \ref{prop5}, we
know that there is no torsion contour. This means that $s=0$ and $r=n-1$.

On the other hand, since $F(w_1 p^{-1})=\gamma^r \gamma_1
F(p^{-1})$ and $\sigma( \gamma_{_{F(a)}})=\sigma( \gamma)$ then we have that

$$
\begin{array}{lcl}
\sigma( w_1 p^{-1}) &=& \sigma( F(w_1 p^{-1})) \\
&=& r \sigma( \gamma)+ \sigma( \gamma_1
F(p^{-1}))\\
&=& (n-1)\sigma( \gamma)+ \sigma( \gamma)\\
&=&n\sigma( \gamma)
\end{array}$$

\noindent a contradiction. Then since  $A$ is triangular, we conclude that
 $\sigma(\gamma_1
F(p^{-1}))< \sigma(\gamma).$ Since $\sigma(\gamma)$ is minimal then
$\gamma_1 F(p^{-1})$ is not of torsion.

On the other hand, since $\gamma_1 F(p^{-1}) \sim \gamma_{_{F(a)}}^{-r}$
then $r=0$ and $s=n-1$. Since $F(w_2 p^{-1})=\gamma_{_{F(a)}}^{-s} \gamma_2
F(p^{-1})$, we have that

$$
\begin{array}{lcl}
\sigma( w_2 p^{-1}) &=& \sigma( F(w_2 p^{-1}))\\
&=& s \sigma( \gamma)+ \sigma( \gamma_2
F(p^{-1}))\\
&=& (n-1)\sigma( \gamma)+ \sigma(\gamma_2 F(p^{-1})).
\end{array}$$

\noindent Since $\sigma( w_2 F(p^{-1})) < n \sigma( \gamma)$ this yields
that $\sigma( \gamma_2 F(p^{-1})) < \sigma( \gamma).$ Again by the
minimality of $\sigma( \gamma)$ we get that $s=0$. Hence, we
prove that $s=0$ and $r=0$. This implies that $n=1$ a
contradiction to our assumption, proving the result.
\end{proof}

\begin{cor} \label{coro7} Let $A$ be a triangular algebra which admits a strongly simply connected
Galois covering for a given presentation $kQ_A/I_A.$ Then, $A$ is
of the first kind.
\end{cor}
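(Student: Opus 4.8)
The plan is to derive the statement from Theorem~\ref{teo6} by applying that result not to $A$ itself but to the support subalgebras that occur in the radicals of the indecomposable projectives. Fix the epimorphism $\nu\colon kQ_A\to A$ with $A\simeq kQ_A/\ker\nu = kQ_A/I_A$. By the definition of an algebra of the first kind it suffices to prove that, for every vertex $x$ of $Q_A$ and every indecomposable direct summand $X$ of $\mbox{rad}\,P_x$, the module $X$ is of the first kind with respect to the induced presentation $\nu_{_X}\colon kQ_X\to A(X)$.

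First I would check that each such support subalgebra $B:=A(X)$ again admits a strongly simply connected Galois covering, this time for the presentation $\nu_{_X}$. Let $F\colon\widetilde{A}\to A$ be the strongly simply connected Galois covering given by the hypothesis, defined by a group $G$. Since $B$ is a connected convex subcategory of $A$ and paths lift along $F$, the full subcategory $F^{-1}(B)$ of $\widetilde{A}$ is convex, and so is each of its connected components; fix one such component $\widetilde{B}$. Now a convex subcategory of a strongly simply connected locally bounded category is again strongly simply connected: triangularity and the finiteness of the sets of paths between two fixed vertices pass to convex subcategories, and any finite convex subcategory of $\widetilde{B}$ is, by transitivity of the convex property, a finite convex subcategory of $\widetilde{A}$, hence simply connected. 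Therefore $\widetilde{B}$ is strongly simply connected, the restriction of $F$ to $\widetilde{B}$ is a Galois covering $\widetilde{B}\to B$ (defined by the stabiliser of $\widetilde{B}$ in $G$) compatible with the presentation $\nu_{_X}$, and since $\widetilde{B}$ is in particular simply connected this is the universal Galois covering of $B$ with respect to $\nu_{_X}$.

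Next, Theorem~\ref{teo6} applies verbatim to the triangular algebra $B$ endowed with the strongly simply connected Galois covering $\widetilde{B}\to B$, and yields that $\pi_1(Q_X,I_{\nu_{_X}})$ is torsion-free; equivalently, $B$ carries no torsion contour with respect to $\nu_{_X}$. Finally I would invoke the covering-theoretic description of modules of the first kind from \cite{CP}: for an indecomposable module $X$ over its own support $B=A(X)$, the absence of torsion in $\pi_1(Q_X,I_{\nu_{_X}})$ (equivalently, the absence of torsion contours) forces $X$ to lie in the image of the push-down functor $F'_{\lambda}$ attached to the universal covering $\widetilde{B}\to B$, that is, $X$ is of the first kind with respect to $\nu_{_X}$. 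As $x$ and $X$ were arbitrary, $A$ is of the first kind.

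The step I expect to be the main obstacle is the last one: turning ``no torsion in the fundamental group of the support'' into ``the module is of the first kind''. This is exactly where one must call on the machinery of \cite{CP} --- the behaviour of the push-down functor and the dichotomy between modules of the first kind and those detected by torsion contours --- and one has to be careful that the covering $\widetilde{B}\to B$ constructed above really is the universal covering relative to the \emph{induced} presentation $\nu_{_X}$, so that it matches the notion of ``first kind'' used in the definition. By contrast, the passage from $\widetilde{A}$ to $\widetilde{B}$ is routine once the stability of strong simple connectedness under convex subcategories has been recorded.
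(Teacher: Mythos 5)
Your reduction to the support algebras $B=A(X)$, and the verification that the restricted covering $\widetilde{B}\to B$ is again a strongly simply connected Galois covering compatible with $\nu_{_X}$ (convexity of $F^{-1}(B)$, transitivity of convexity, the stabiliser argument), is sound, and applying Theorem~\ref{teo6} to $B$ is legitimate. But the step you yourself flag as the main obstacle is a genuine gap, and it is not fixable along the route you propose: there is no result in \cite{CP} (nor can there be) asserting that absence of torsion in $\pi_1(Q_X,I_{\nu_{_X}})$ forces a sincere indecomposable $B$-module $X$ into the image of the push-down functor of the universal covering of $B$. This is simply false. Take $B$ the Kronecker algebra (two vertices, two parallel arrows, no relations): it admits a strongly simply connected Galois covering, namely its universal cover, which is a tree of type $A_\infty^\infty$ without relations, and $\pi_1(Q_B,0)\cong\Z$ is free, hence torsion-free. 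Yet the sincere regular module of dimension vector $(1,1)$ on which both arrows act as the identity is of the second kind: the push-down image of the universal covering contains, among the $(1,1)$-dimensional modules, only those on which one of the two arrows acts as zero. So ``torsion-free fundamental group of the support'' alone can never yield ``$X$ is of the first kind''.

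What your argument discards is precisely the hypothesis that $X$ is an indecomposable direct summand of $\mbox{rad}\,P_x$, and this must be exploited through the covering of $A$ itself, not only of $A(X)$. That is how the paper proceeds: Theorem~\ref{teo6} applied to $A$ gives that $\pi_1(Q_A,I_A)$ is torsion-free; torsion-freeness makes the push-down functor $F_\lambda$ of the covering of $A$ preserve indecomposable modules, so the indecomposable summands of $\mbox{rad}\,P_x=F_\lambda(\mbox{rad}\,\widetilde{P}_{\tilde{x}})$ are themselves push-downs, and the passage from this to ``of the first kind with respect to the universal covering of the support'' is exactly the content of \cite[1.8]{CP} which the paper cites. (Note also that being a push-down from some intermediate Galois covering of $B$ is a priori weaker than being a push-down from the universal covering of $B$, since the image of the universal push-down is the smallest one; this is an additional reason why the localisation-only strategy cannot close the argument without the machinery of \cite{CP}.) So your proposal diverges from the paper at the decisive point, and the divergent step fails; the convexity and restriction work, while correct, does not address the real difficulty.
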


\begin{proof} By the above theorem, if $A$ admits a strongly simply connected
Galois covering for a given presentation $kQ_A/I_A$  of $A$, then the fundamental group
$\pi_1(Q_A, I_A)$ is torsion-free. Moreover, by \cite[1.8]{CP}  since
$F_\lambda$ preserves indecomposable modules then $A$ is of the first kind.
\end{proof}

By a result due to Castonguay and de la Pe\~{n}a given in
\cite{CP}, it is known that if $A$ is of the first kind respect to
a given presentation $kQ_A/I_A$  of $A$, then the fundamental
group $\pi_1(Q_A,I_A)$ is free. \vspace{.05in}

Therefore we are in position to state our main theorem.

\begin{thm} \label{teo8} Let $A$ be a triangular algebra which admits a strongly simply connected
Galois covering for a given presentation $kQ_A/I_A$ of $A$. Then,
the fundamental group $\pi_1(Q_A, I_A)$ is free.\end{thm}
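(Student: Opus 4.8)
The plan is to combine the two results already established in the excerpt. By Corollary~\ref{coro7}, the hypothesis that $A$ is a triangular algebra admitting a strongly simply connected Galois covering for the presentation $kQ_A/I_A$ implies that $A$ is of the first kind with respect to this presentation. This corollary in turn rests on Theorem~\ref{teo6}, which gives that $\pi_1(Q_A,I_A)$ is torsion-free, together with the criterion from \cite[1.8]{CP} relating torsion-freeness and preservation of indecomposables under the push-down functor to being of the first kind. So the first step is simply to invoke Corollary~\ref{coro7}.

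The second step is to apply the theorem of Castonguay and de la Pe\~na recalled just before the statement, namely \cite[Theorem 2.3]{CP}: if $A$ is a triangular algebra of the first kind with respect to the presentation $\upsilon: kQ_A \rightarrow A$, then the fundamental group $\pi_1(Q_A, I_\upsilon)$ is free. Since we have just established that $A$ is of the first kind with respect to $kQ_A/I_A$, statement~(1) of that theorem applies verbatim and yields that $\pi_1(Q_A, I_A)$ is free. This completes the proof.

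There is essentially no obstacle at this final stage: the work has all been done in Theorem~\ref{teo6} (and the chain Lemmas~\ref{lema1}--\ref{lema4}, Propositions~\ref{prop2},~\ref{prop5} feeding into it) and in the cited results of \cite{CP}. The one point to be careful about is that the notion ``of the first kind'' must be taken with respect to the \emph{same} presentation $\upsilon = \nu$ appearing in $A \simeq kQ_A/I_A$, which is indeed how Corollary~\ref{coro7} is phrased, so the hypotheses of \cite[Theorem 2.3]{CP} are met with $I_\upsilon = I_A$. Hence the proof is just the two-line concatenation: Corollary~\ref{coro7} gives ``first kind'', and \cite[Theorem 2.3(1)]{CP} then gives ``free''.

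\begin{proof}
By Corollary~\ref{coro7}, since $A$ is a triangular algebra admitting a strongly simply connected Galois covering for the given presentation $kQ_A/I_A$, the algebra $A$ is of the first kind with respect to this presentation. Then, by \cite[Theorem 2.3]{CP}, the fundamental group $\pi_1(Q_A, I_A)$ is free.
\end{proof}
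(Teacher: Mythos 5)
Your proposal is correct and follows exactly the route the paper takes: Corollary~\ref{coro7} gives that $A$ is of the first kind with respect to the presentation $kQ_A/I_A$, and then the result of Castonguay--de la Pe\~na (\cite[Theorem 2.3]{CP}) immediately yields that $\pi_1(Q_A, I_A)$ is free, which is precisely how the paper concludes (it states the theorem right after recalling that result, leaving this two-step concatenation implicit).
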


\makeatletter
\renewcommand{\@biblabel}[1]{\hfill#1.}
\makeatother

\end{document}